\newtheorem{thm}{Theorem}[section]
\newtheorem{cor}[thm]{Corollary}
\newtheorem{lem}[thm]{Lemma}
\newtheorem{prop}[thm]{Proposition}
\theoremstyle{definition}
\newtheorem{defn}[thm]{Definition}
\newtheorem{definition}[thm]{Definition}
\newtheorem{example}[thm]{Example}
\newtheorem{ques}[thm]{Question}
\theoremstyle{remark}
\numberwithin{equation}{section}
\newcommand{\ind}{\mathrm{ind}}
\newcommand{\fpc}{\mathrm{Fpc}}
\newcommand{\F}{\mathbf{F}}
\newcommand{\out}{\mathrm{Out}}
\newcommand{\B}{\mathcal{B}}
\begin{document}
\title{On the bounded index property\\ for products of aspherical polyhedra}
\author{Qiang ZHANG and Shengkui YE}
\address{School of Mathematics and Statistics, Xi'an Jiaotong University,
Xi'an 710049, China}
\email{zhangq.math@mail.xjtu.edu.cn}

\address{Department of Mathematical Sciences, Xi'an Jiaotong-Liverpool University, Jiangsu, China}
\email{shengkui.ye@xjtlu.edu.cn}

\thanks{The authors are partially supported by NSFC Grants \#11771345, \#11961131004 and \#11971389.}
\subjclass[2010]{55M20, 55N10, 32Q45}
\keywords{Bounded index property, fixed point, aspherical polyhedron, negative curved manifold, product}

\begin{abstract}
A compact polyhedron $X$ is said to have the
Bounded Index Property for Homotopy Equivalences (BIPHE) if there is a finite bound $\B$ such that for
any homotopy equivalence $f:X\rightarrow X$ and
any fixed point class $\mathbf{F}$ of $f$, the index $|\mathrm{ind}(f,\mathbf{F})|\leq \mathcal{B}$.
In this note, we consider the product of compact polyhedra, and
give some sufficient conditions for it to have BIPHE. Moreover, we show that products of closed Riemannian manifolds with
negative sectional curvature, in particular hyperbolic manifolds, have BIPHE, which gives an affirmative answer to a special
case of a question asked by Boju Jiang.
\end{abstract}

\maketitle





\section{Introduction}

Fixed point theory studies fixed points of a self-map $f$ of a space $X$.
Nielsen fixed point theory, in particular, is concerned with the properties
of the fixed point set
\begin{equation*}
\mathrm{Fix} f:=\{x\in X|f(x)=x\}
\end{equation*}
that are invariant under homotopy of the map $f$ (see \cite{fp1} for an
introduction).

The fixed point set $\mathrm{Fix} f$ splits into a disjoint union of \emph{%
fixed point classes}: two fixed points $a$ and $a^{\prime }$ are in the same
class if and only if there is a lifting $\tilde f: \widetilde X\to
\widetilde X$ of $f$ such that $a, a^{\prime }\in p(\mathrm{Fix} \tilde f)$,
where $p:\widetilde X\to X$ is the universal cover. Let $\mathrm{Fpc}(f)$
denote the set of all the fixed point classes of $f$. For each fixed point
class $\mathbf{F}\in \mathrm{Fpc}(f)$, a homotopy invariant \emph{index} $%
\mathrm{ind}(f,\mathbf{F})\in \mathbb{Z}$ is well-defined. A fixed point
class is \emph{essential} if its index is non-zero. The number of essential
fixed point classes of $f$ is called the $Nielsen$ $number$ of $f$, denoted
by $N(f)$. The famous Lefschetz-Hopf theorem says that the sum of the
indices of the fixed points of $f$ is equal to the $Lefschetz$ $number$ $%
L(f) $, which is defined as
\begin{equation*}
L(f):=\sum_q(-1)^q\mathrm{Trace} (f_*: H_q(X;\mathbb{Q})\to H_q(X;\mathbb{Q}%
)).
\end{equation*}

In this note, all maps considered are continuous, and all spaces are triangulable, namely, they are
homeomorphic to polyhedra. A compact polyhedron $X$ is said to have the
\emph{Bounded Index Property (BIP)}(resp. \emph{Bounded Index Property for
Homeomorphisms (BIPH)}, \emph{Bounded Index Property for Homotopy
Equivalences (BIPHE)}) if there is an integer $\mathcal{B}>0$ such that for
any map (resp. homeomorphism, homotopy equivalence) $f:X\rightarrow X$ and
any fixed point class $\mathbf{F}$ of $f$, the index $|\mathrm{ind}(f,%
\mathbf{F})|\leq \mathcal{B}$. Clearly, if $X$ has BIP, then $X$ has BIPHE
and hence has BIPH. For an aspherical closed manifold $M$, if the well-known
Borel conjecture (any homotopy equivalence $f:M\rightarrow M$ is homotopic
to a homeomorphism $g:M\rightarrow M$) is true, then $M$ has BIPHE if and
only if it has BIPH.

In \cite{JG}, Jiang and Guo proved that compact surfaces with negative Euler
characteristics have BIPH. Later, Jiang \cite{fp2} showed that graphs and
surfaces with negative Euler characteristics not only have BIPH but also
have BIP (see \cite{K1}, \cite{K2} and \cite{JWZ} for some parallel
results). Moreover, Jiang asked the following question:

\begin{ques}
(\cite[Qusetion 3]{fp2}) Does every compact aspherical polyhedron $X$ (i.e. $%
\pi_i(X)=0$ for all $i>1$) have BIP or BIPH?
\end{ques}

In \cite{Mc}, McCord showed that infrasolvmanifolds (manifolds which admit a
finite cover by a compact solvmanifold) have BIP. In \cite{JW}, Jiang and
Wang showed that geometric 3-manifolds have BIPH for orientation-preserving
self-homeomorphisms: the index of each essential fixed point class is $\pm 1$%
. In \cite{Z}, the first author showed that orientable compact Seifert
3-manifolds with hyperbolic orbifolds have BIPH, and later in \cite{Z2, Z3},
he showed that compact hyperbolic $n$-manifolds (not necessarily orientable)
also have BIPH. Recently, in \cite{ZZ}, Zhang and Zhao showed that
products of hyperbolic surfaces have BIPH.

Note that in \cite[Section 6]{fp2}, Jiang gave an example that showed that BIPH is not preserved by taking products: the 3-sphere $S^3$ has BIPH while the product $S^3\times S^3$ does not have BIPH. In this note, we consider the product of connected compact polyhedra, and
give some sufficient conditions for it to have BIPHE (and hence has BIPH).
The main result of this note is the following:

\begin{thm}
\label{main thm1} Suppose $X_1,\ldots,X_n$ are connected compact aspherical
polyhedra satisfying the following two conditions:

$\mathrm{(1)}$ $\pi_1(X_i)\not \cong\pi_1(X_j)$ for $i\neq j$, and all of
them are centerless and indecomposable;

$\mathrm{(2)}$ all of $X_1,\ldots,X_n$ have BIPHE.

\noindent Then the product $X_1\times\cdots\times X_n$ also has BIPHE (and
hence has BIPH).
\end{thm}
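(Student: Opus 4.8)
The plan is to reduce a homotopy equivalence of the product to a product of homotopy equivalences of the factors, and then invoke the product formula for the fixed point index. Write $X=X_1\times\cdots\times X_n$ and $G_i=\pi_1(X_i)$. Since each $X_i$ is aspherical, $X$ is aspherical with $\pi_1(X)=G:=G_1\times\cdots\times G_n$, and, as every $G_i$ is centerless, so is $G$.

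First I would establish the crucial algebraic fact: under hypothesis $\mathrm{(1)}$, every automorphism $\phi$ of $G$ preserves each direct factor, that is $\phi(G_i)=G_i$ for all $i$, where $G_i$ is identified with the subgroup $1\times\cdots\times G_i\times\cdots\times 1$. Since each $G_i$ is centerless, $C_G(G_i)=\prod_{j\ne i}G_j$, so distinct factors centralize one another and are visible inside $G$; the substantive input is the uniqueness, up to reordering, of the decomposition of the centerless group $G$ into directly indecomposable subgroups (a Krull--Remak--Schmidt type statement in which the factors are pinned down as subgroups, not merely up to isomorphism). Applying this to the two decompositions $G=G_1\times\cdots\times G_n=\phi(G_1)\times\cdots\times\phi(G_n)$ shows that $\phi$ permutes $\{G_1,\dots,G_n\}$; since $\phi(G_i)\cong G_i$ while the $G_i$ are pairwise non-isomorphic, the permutation is trivial. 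Hence $\phi=\phi_1\times\cdots\times\phi_n$ with $\phi_i:=\phi|_{G_i}\in\Aut(G_i)$. I expect this step to be the main obstacle, since it is where all of hypothesis $\mathrm{(1)}$ is consumed; everything downstream is comparatively formal.

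Next, given a homotopy equivalence $f:X\to X$, the induced map $f_*$ is an automorphism of $G$, well defined up to conjugacy. By the previous step $f_*=\phi_1\times\cdots\times\phi_n$. Because $X_i$ is aspherical, each $\phi_i\in\Aut(G_i)$ is realized by a homotopy equivalence $f_i:X_i\to X_i$ with $(f_i)_*=\phi_i$, and then $f_1\times\cdots\times f_n$ induces $f_*$ on $\pi_1$. Since a self-map of an aspherical space is determined up to free homotopy by the conjugacy class of its effect on $\pi_1$, and since inner automorphisms of $G$ preserve each factor, the product decomposition survives the conjugacy ambiguity. Therefore $f\simeq f_1\times\cdots\times f_n$, and as the Nielsen fixed point data are homotopy invariant we may compute with the product map.

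Finally I would apply the product formula for the fixed point index. For a product map the fixed point classes are exactly the products $\F_1\times\cdots\times\F_n$ with $\F_i\in\fpc(f_i)$ (each lift of $f_1\times\cdots\times f_n$ is a product of lifts, and deck transformations act factorwise), and
$$\ind\bigl(f_1\times\cdots\times f_n,\ \F_1\times\cdots\times\F_n\bigr)=\prod_{i=1}^{n}\ind(f_i,\F_i).$$
Since each $X_i$ has BIPHE by hypothesis $\mathrm{(2)}$, there are bounds $\B_i$ with $|\ind(f_i,\F_i)|\le\B_i$, whence $|\ind(f,\F)|\le\prod_{i=1}^{n}\B_i$ for every fixed point class $\F$ of $f$. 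As $\B:=\prod_{i=1}^{n}\B_i$ is independent of $f$, the product $X$ has BIPHE, and therefore also BIPH.
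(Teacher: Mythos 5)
Your proposal is correct, and its skeleton is the same as the paper's: use hypothesis (1) to show that every automorphism of $\pi_1(X_1\times\cdots\times X_n)=G_1\times\cdots\times G_n$ splits as $\phi_1\times\cdots\times\phi_n$, realize each $\phi_i$ by a homotopy equivalence $f_i$ of the aspherical factor $X_i$, conclude $f\simeq f_1\times\cdots\times f_n$ by asphericity of the product, and finish with the multiplicativity of the fixed point index over factors (the paper's Lemma 5.2) together with hypothesis (2). The one place you genuinely diverge is the algebraic step, which you yourself flag as the main obstacle: the paper proves the splitting directly (Proposition 2.4), by showing that centerless plus indecomposable is equivalent to ``unfactorizable'' (Lemma 2.3) and then analyzing the projections of the images $\phi(G_i)$ onto the factors, whereas you appeal to a Krull--Remak--Schmidt-type uniqueness of the decomposition of a centerless group into indecomposable direct factors. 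Be careful with that citation: the textbook Krull--Remak--Schmidt theorem assumes chain conditions on normal subgroups, which the fundamental groups of aspherical polyhedra need not satisfy; what you actually need is the Remak-type statement that a centerless group has at most one decomposition into finitely many nontrivial indecomposable factors, with the factors unique \emph{as subgroups}. That statement is true without any chain conditions---if $N\trianglelefteq A\times B$ with $A$ centerless and $N\cap A=1$, then $[\pi_A(N),A]\le N\cap A=1$ forces $\pi_A(N)\le Z(A)=1$, hence $N\le B$, and one then pairs off factors by an induction that is essentially the paper's own projection/commutation argument---so your route is sound, but the paper's self-contained lemma sidesteps the need to locate this precise form of the theorem. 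A small point in your favor: you explicitly address the fact that $f_*$ is only defined up to inner automorphism and check that inner automorphisms preserve the factors, a point the paper's proof passes over in silence.
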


Moreover, we show that products of closed Riemannian manifolds with
negative sectional curvature have BIPHE:

\begin{thm}
\label{main thm2} Let $M=M_1\times\cdots\times M_n$ be a product of
finitely many connected closed Riemannian manifolds, each with negative
sectional curvature everywhere but not necessarily with the same dimensions
(in particular hyperbolic manifolds). Then $M$ has BIPHE.
\end{thm}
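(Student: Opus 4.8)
The plan is to prove Theorem~\ref{main thm2} by generalizing the mechanism behind Theorem~\ref{main thm1} so as to remove the distinctness hypothesis~(1), which can fail here because several factors may carry isomorphic fundamental groups (as in $N\times N$). First I would record the structural facts supplied by each closed negatively curved manifold $M_i$. By the Cartan--Hadamard theorem its universal cover is diffeomorphic to Euclidean space, so $M_i$ is aspherical. By Preissmann's theorem every nontrivial abelian subgroup of $G_i:=\pi_1(M_i)$ is infinite cyclic; since $G_i$ is torsion-free word-hyperbolic and not virtually cyclic, the centralizer of any infinite-order element is virtually cyclic, so the center of $G_i$ is trivial, and $G_i$ cannot split as a direct product of two infinite groups (that would produce a $\mathbb{Z}^2$ subgroup), hence it is directly indecomposable. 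Finally each $M_i$ has BIPHE: this is \cite{Z2,Z3} in the constant-curvature case, and the same argument survives in variable negative curvature, since the only geometric input is that $G_i$ is word-hyperbolic with boundary a sphere $\partial G_i\cong S^{\dim M_i-1}$ to which any $\phi\in\aut(G_i)$ extends as a homeomorphism, and the index of an essential fixed point class is computed from this boundary extension and is bounded.

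Next I would analyze a self-homotopy equivalence $f\colon M\to M$. As $M$ is aspherical, $f$ is determined up to homotopy by the induced automorphism $\phi$ of $G=G_1\times\cdots\times G_n$. Because each $G_i$ is centerless and directly indecomposable, the classical structure theorem for direct products of indecomposable centerless groups applies: the decomposition into the $G_i$ is unique, and no diagonal subgroup of a block $G_i\times G_i$ can be a direct factor, since its centralizer is trivial. Consequently $\phi$ is a generalized permutation automorphism: there is a permutation $\sigma\in S_n$ with $G_i\cong G_{\sigma(i)}$ and isomorphisms $\phi_i\colon G_i\to G_{\sigma(i)}$ with $\phi$ carrying $G_i$ onto $G_{\sigma(i)}$ via $\phi_i$, so $\sigma$ permutes only factors with isomorphic fundamental groups. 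Realizing each $\phi_i$ by a homotopy equivalence $f_i\colon M_i\to M_{\sigma(i)}$ (possible since aspherical manifolds with isomorphic $\pi_1$ are homotopy equivalent), I would replace $f$ up to homotopy by the generalized permutation map $\bar f$ whose $\sigma(i)$-th coordinate is $f_i(x_i)$. Since fixed point classes and their indices are homotopy invariants, it suffices to bound the indices of $\bar f$.

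The heart of the argument is the index computation for $\bar f$, organized along the cycles of $\sigma$. Grouping the factors of $M$ into blocks indexed by the cycles of $\sigma$, the map $\bar f$ is the direct product of its restrictions to these blocks, so by the product formula for the fixed point index it suffices to bound the index on each block. On a cycle $i\to\sigma(i)\to\cdots\to\sigma^{\ell-1}(i)\to i$, a point is fixed by the block map precisely when its first coordinate is fixed by the return map $F:=f_{\sigma^{\ell-1}(i)}\circ\cdots\circ f_{\sigma(i)}\circ f_i\colon M_i\to M_i$, the remaining coordinates being determined by the first; projection to the first coordinate is then a homeomorphism of fixed point sets. Iterating the commutativity (trace) property of the fixed point index around the cycle, the index of a fixed point class of the block map equals, up to sign, the index of the corresponding fixed point class of $F$. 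As $F$ is a self-homotopy equivalence of the single factor $M_i$, which has BIPHE with bound $\mathcal B_i$, each such index is at most $\mathcal B_i$ in absolute value. Multiplying over the at most $n$ cycles gives $|\ind(\bar f,\mathbf F)|\le\prod_i\mathcal B_i$, a bound independent of $f$, so $M$ has BIPHE.

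I expect the main obstacle to be the block index identity of the last paragraph. While the product formula disposes of distinct cycles and the swap case $\bar f(x_1,x_2)=(f_1(x_2),f_2(x_1))$ reduces transparently to the composite $f_1\circ f_2$, making the reduction to the return map $F$ rigorous for a general cycle demands a careful iteration of the commutativity property of the fixed point index, with correct bookkeeping of the fixed point classes (twisted conjugacy classes) and of signs. A secondary difficulty is the single-factor claim in variable negative curvature, where Mostow rigidity is unavailable, so the bound on indices must be extracted purely from the word-hyperbolicity of $G_i$ and the sphere at infinity rather than from an isometric representative.
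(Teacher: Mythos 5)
Your overall strategy---reduce a homotopy equivalence of the product to a generalized permutation map, split along the cycles of the permutation, and bound each block by its return map---is sound in outline, but both of its load-bearing steps are left unproven, and both are exactly where the paper does its real work. The first gap is your single-factor claim. Since the return map $F$ of a cycle is only a homotopy equivalence, your plan needs each closed negatively curved $M_i$ of dimension $\geq 3$ to have BIPHE, and you assert this by saying the arguments of \cite{Z2,Z3} ``survive in variable negative curvature.'' That assertion is the hardest point of the whole theorem and cannot be waved through: \cite{Z2,Z3} concern constant curvature, where Mostow rigidity is available, and your proposed substitute (a bounded index extracted from the boundary sphere of the word-hyperbolic group) is never developed. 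The paper's actual input is entirely different and you never invoke it: by Rips and Sela \cite{RS}, $\out(\pi_1(M_i))$ is finite when $\dim M_i\geq 3$; by Theorem \ref{aut and out of product groups} the block of all dimension $\geq 3$ factors then has finite outer automorphism group; and Lemma \ref{finite out implies BIPH} (finitely many conjugacy classes in $\out$ give finitely many mutant classes, hence bounded indices) yields BIPHE for that entire block at once (Theorem \ref{main thm3})---no cyclic reduction is needed there at all. The cyclic analysis is reserved in the paper for the surface factors, where $\out$ is infinite and this finiteness route is unavailable, and there it is combined with realizing automorphisms by homeomorphisms and quoting the Jiang--Guo bound.

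The second gap is your index identity for a cycle. ``Iterating the commutativity (trace) property of the fixed point index'' cannot produce it: commutativity applies to maps $\phi:X\to Y$, $\psi:Y\to X$ and relates $\psi\comp\phi$ to $\phi\comp\psi$, but the cyclic block map on $M_i^{\,\ell}$ does not factor through the single factor $M_i$ even up to homotopy (a self-homotopy equivalence of $M_i^{\,\ell}$ is an isomorphism on top $\Z/2$-homology, which anything factoring through $M_i$ kills), so there is nothing to commute. What is true is that conjugating by the product map $\mathrm{id}\times f_1\times (f_2\comp f_1)\times\cdots$ (conjugation being a special case of mutation) brings the block map to the normal form $\tau\comp(\mathrm{id}\times\cdots\times\mathrm{id}\times F)$; but one still needs a genuine index computation, which the paper supplies in Lemma \ref{index of cylic homeomorphism}: a lifting argument for the bijection of fixed point classes, plus smoothing, transversality, and the exact block determinant identity $\det(I_{\ell d}-N)=\det(I_d-N_\ell\cdots N_1)$ (exact---your hedge ``up to sign'' is not needed and signals that the mechanism is not pinned down). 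The paper proves this lemma only for homeomorphisms of surfaces, which suffices given its two-track structure; your uniform approach would need it for homotopy equivalences of arbitrary closed smooth manifolds, which is provable by the same transversality scheme but is a real argument, not bookkeeping. If you fill the first gap with Rips--Sela and the second with the transversality computation, your plan becomes a correct, arguably more uniform, variant of the paper's proof; as written, neither step is established.
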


Recall that a closed $2$-dimensional Riemannian manifold $M$ with negative
sectional curvature is a closed
hyperbolic surface, and hyperbolic surfaces have BIP. As a corollary of
Theorem \ref{main thm2}, we have

\begin{cor}
A closed Riemannian manifold with negative sectional curvature everywhere
has BIPHE.
\end{cor}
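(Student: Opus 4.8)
The plan is to read the corollary as the single-factor instance of Theorem~\ref{main thm2}: a connected closed Riemannian manifold $M$ with everywhere-negative sectional curvature is exactly a product $M=M_1$ with $n=1$, so the corollary follows at once from Theorem~\ref{main thm2}. Thus the only thing to establish is the one-factor statement ``a single closed negatively curved manifold has BIPHE,'' which is in any case the content needed to verify hypothesis~$(2)$ of Theorem~\ref{main thm1} when deducing Theorem~\ref{main thm2}. I would therefore organize the argument around that single-factor claim.

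To prove it, I would first record the relevant topology and group theory. By the Cartan--Hadamard theorem the universal cover $\tilde M$ is diffeomorphic to $\mathbb{R}^{\dim M}$, so $M$ is aspherical and $\Gamma=\pi_1(M)$ is torsion-free; by compactness the curvature is pinched, so $\Gamma$ is a Gromov-hyperbolic group, and when $\dim M\geq 2$ it is non-elementary, hence centerless and (direct-product) indecomposable. A homotopy equivalence $f:M\to M$ then induces an automorphism $\phi=f_*\in\mathrm{Aut}(\Gamma)$ well defined up to inner automorphism, and since $M$ is aspherical the fixed point classes of $f$ are in bijection with the Reidemeister (twisted-conjugacy) classes of $\phi$. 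The index of a class equals the fixed point index of the corresponding lift $\tilde f_\gamma=\gamma\circ\tilde f$ acting on $\tilde M$, so it suffices to bound the latter uniformly.

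The hard part will be the geometric index bound on $\tilde M$, and this is where I expect the main obstacle: I would show that every lift $\tilde f_\gamma$ has either empty or compact fixed locus, with total fixed point index lying in $\{-1,0,+1\}$. The idea is to exploit the unique geodesic between $x$ and $\tilde f_\gamma(x)$ to straighten $\tilde f_\gamma$ and then to analyze the displacement function $x\mapsto d\bigl(x,\tilde f_\gamma(x)\bigr)$, whose convexity properties in the CAT$(-\kappa)$ space $\tilde M$ confine its zero set and force any essential class to have index $\pm 1$; this is also the source of the uniform bound $\mathcal{B}=1$. The delicate point is that for \emph{variable} negative curvature one cannot appeal to Mostow rigidity, as is available in the constant-curvature case treated in \cite{Z2,Z3}, and must instead argue purely from the negative-curvature convexity estimates on $\tilde M$.

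Finally I would dispatch the low-dimensional boundary case for completeness: in dimension $2$, negative sectional curvature is negative Gaussian curvature, so $M$ is a closed hyperbolic surface (of genus $\geq 2$ by Gauss--Bonnet), which already enjoys the stronger property BIP, hence BIPHE, by Jiang's theorem recalled in the introduction; for $\dim M\geq 3$ the displacement argument above---equivalently the $n=1$ case of Theorem~\ref{main thm2}---applies directly, completing the proof.
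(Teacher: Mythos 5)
Your opening sentence is already the paper's entire proof: the corollary is precisely the $n=1$ instance of Theorem~\ref{main thm2}, and nothing more needs to be said. Where you go astray is the claim that one must still ``establish the one-factor statement'' to avoid circularity. That misreads the paper's logic: Theorem~\ref{main thm2} is \emph{not} deduced from Theorem~\ref{main thm1} (it cannot be, since its factors may be repeated, violating hypothesis (1) of Theorem~\ref{main thm1}), so its proof does not consume the corollary as an input. Instead, Theorem~\ref{main thm2} is proved directly: an arbitrary homotopy equivalence is split, via Proposition~\ref{aut of prouct group}, into a surface part (handled by cyclic homeomorphisms and the Jiang--Guo bounds, Proposition~\ref{bounds for cyclic homeomorphism}) and a part of dimension $\geq 3$ (handled by Theorem~\ref{main thm3}). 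Citing Theorem~\ref{main thm2} for the corollary is therefore legitimate and complete.

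The independent argument you offer in its place has a genuine gap in the case $\dim M\geq 3$. Convexity of the displacement function $x\mapsto d\bigl(x,\tilde f_\gamma(x)\bigr)$ on the CAT$(-\kappa)$ space $\widetilde M$ is a property of \emph{isometries}; a lift of an arbitrary homotopy equivalence is not an isometry, and in variable negative curvature there is no Mostow rigidity allowing you to replace $f$ by one --- as you yourself note. So the step you flag as ``the main obstacle'' is not merely hard, it is unavailable, and your claimed conclusion that every essential class has index $\pm 1$ is not only unsupported but false in general: already for an isometry of a closed hyperbolic $4$-manifold whose fixed locus is a totally geodesic genus-$g$ surface, the corresponding fixed point class has index $2-2g$. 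The paper never asserts such a sharp bound for homotopy equivalences; its actual route in dimension $\geq 3$ is entirely soft and avoids any geometric index estimate. By Rips--Sela, $\mathrm{Out}(\pi_1(M))$ is finite, hence has finitely many conjugacy classes, and Lemma~\ref{finite out implies BIPH} (mutant invariance of indices, Proposition~\ref{ind for muatant}, plus the fact that each map has finitely many fixed point classes) then yields BIPHE with an unspecified finite bound. Your dimension $2$ case --- Jiang's BIP for closed hyperbolic surfaces --- is correct and matches the paper's remark preceding the corollary, but the heart of your proposal, the displacement argument, would need to be replaced by this finiteness argument.
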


To prove the above theorems, we first study automorphisms of products of groups in Section \ref{sect 2},
and give some facts about the bounded index property of fixed points in Section \ref{sect 3}. Then in Section \ref{sect 4}, we generalize the results of alternating
homeomorphisms (see \cite[Section 3]{ZZ}) to that of cyclic
homeomorphisms of products of surfaces. Finally in Section \ref{sect 5}, we show that every homotopy equivalence of products of aspherical manifolds can be homotoped to two nice forms, and taking advantage of that, we finish the proofs.


\section{Automorphisms of products of groups}\label{sect 2}

In this section, we give some facts about automorphisms of direct products of
finitely many groups.

\begin{defn}
A group $G$ is called $\emph{unfactorizable}$ if whenever $G=HK$ for
subgroups $H,K$ satisfying $hk=kh$ for any $h\in H,k\in K$ we have either $%
H=1$ or $K=1.$ If $G=H\times K$ for some groups $H,K\ $implies either $H$ or
$K$ is trivial, we call $G$ is \emph{indecomposable}.
\end{defn}

\begin{lem}
Let $G$ be an unfactorizable group and $G=\Pi_{i=1}^n A_{i}$ a direct
product. Then $A_{i}=G $ for some $i$ and all $A_{j}=1$ for $i\neq j.$
\end{lem}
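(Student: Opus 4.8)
The plan is to turn the direct-product decomposition into an internal factorization of exactly the shape appearing in the definition of \emph{unfactorizable}, and then induct on $n$. First I would identify each factor $A_i$ with its canonical copy inside $G=\prod_{i=1}^{n}A_i$, namely the subgroup of those elements whose coordinates are trivial outside the $i$-th slot. With this identification the base case $n=1$ is immediate, since then $G=A_1$.

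For the inductive step I would single out the first factor and lump the rest together: set $H=A_1$ and $K=A_2A_3\cdots A_n$. Two facts need to be recorded. Every $g=(a_1,\dots,a_n)\in G$ can be written as $g=(a_1,1,\dots,1)\cdot(1,a_2,\dots,a_n)$, with the first term in $H$ and the second in $K$, so $G=HK$; and any element of $A_1$ commutes with any element of $A_2\cdots A_n$, because in a direct product elements supported on disjoint sets of coordinates commute. Hence the pair $(H,K)$ satisfies precisely the hypothesis in the definition of unfactorizable.

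Applying that hypothesis to $G=HK$ gives two cases. If $K=A_2\cdots A_n=1$, then $A_i=1$ for every $i\geq 2$ and $G=A_1$, which is the conclusion with $i=1$. If instead $H=A_1=1$, then $G=A_2\times\cdots\times A_n$ is a direct product of $n-1$ groups to which the inductive hypothesis applies, yielding a single index $i\in\{2,\dots,n\}$ with $A_i=G$ and all remaining factors trivial; combined with $A_1=1$ this is the desired conclusion for the original product.

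The argument is essentially forced once the grouping $H=A_1$, $K=A_2\cdots A_n$ is in place, so I do not expect a genuine obstacle. The only point deserving care is the verification of the elementwise commuting condition for this particular grouping (rather than for the individual factors), since that is the sole place where the direct-product structure is used and where the hypothesis ``unfactorizable'' --- as opposed merely to ``indecomposable'' --- is invoked.
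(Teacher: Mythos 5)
Your proof is correct and is essentially the paper's argument: both reduce the $n$-fold product to a two-block factorization into elementwise-commuting subgroups and apply the definition of unfactorizability, the paper doing so by contradiction (grouping $A_1A_2$ against $\Pi_{i\neq 1,2}A_i$ when two factors are assumed non-trivial) where you organize it as an induction on $n$ with the grouping $H=A_1$, $K=A_2\cdots A_n$. One small correction to your closing remark: since your blocks satisfy $H\cap K=1$ (they are supported on disjoint coordinates), $G=HK$ with commuting blocks is in fact an internal direct product $G=H\times K$, so mere indecomposability would already suffice for this lemma --- the extra strength of unfactorizability (allowing $H\cap K\neq 1$) is not actually invoked here, but only later in the paper, e.g.\ when factoring off the center.
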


\begin{proof}
Suppose that there are at least two non-trivial components $A_{1},A_{2}.$
Then $G=A_{1}A_{2}\times \Pi _{i\neq 1,2}A_{i}.$ Then $\Pi _{i\neq
1,2}A_{i}=1,$ since $G$ is unfactorizable. Therefore, $G=A_{1}A_{2}$ a
contradiction.
\end{proof}

\begin{lem}
A group $G$ is unfactorizable if and only if $G$ is centerless and
indecomposable.
\end{lem}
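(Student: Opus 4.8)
The plan is to prove both implications directly from the definitions, the crux being the observation that in a commuting factorization $G=HK$ the intersection $H\cap K$ is forced into the center $Z(G)$, so that centerlessness converts such a factorization into a genuine internal direct product.

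For the forward direction (unfactorizable implies centerless and indecomposable), I would first note that an internal direct product $G=H\times K$ is in particular a commuting factorization $G=HK$, so unfactorizability immediately yields $H=1$ or $K=1$; thus $G$ is indecomposable. For centerlessness I would use the factorization with $H=G$ and $K=Z(G)$: since every element of $Z(G)$ commutes with every element of $G$, this is a commuting factorization and $G=G\cdot Z(G)$, so unfactorizability forces one factor to be trivial; as $G\neq 1$ gives $H=G\neq 1$, we conclude $K=Z(G)=1$ (the case $G=1$ being trivially centerless).

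For the reverse direction (centerless and indecomposable implies unfactorizable), I would start from an arbitrary commuting factorization $G=HK$ and upgrade it to an internal direct product. First, any $x\in H\cap K$ commutes with all of $H$ (since $x\in K$) and with all of $K$ (since $x\in H$), hence with all of $G=HK$, so $x\in Z(G)=1$; thus $H\cap K=1$. Second, a short computation shows both factors are normal: writing a general element as $g=h'k'$, the pointwise commuting hypothesis gives $ghg^{-1}=h'hh'^{-1}\in H$ for $h\in H$, and symmetrically $gkg^{-1}\in K$ for $k\in K$. With $G=HK$, $H\cap K=1$, and $H,K\trianglelefteq G$, we obtain $G=H\times K$, and indecomposability then forces $H=1$ or $K=1$.

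I expect no genuine obstacle here; the only points requiring care are routine bookkeeping, namely deriving normality of the factors from the pointwise commuting relation and pinning down the intersection via the center. The trivial-group edge case should also be checked so that the equivalence holds without qualification.
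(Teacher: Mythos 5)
Your proof is correct and follows essentially the same route as the paper: the forward direction uses the commuting factorization $G = G\cdot Z(G)$ to kill the center, and the reverse direction upgrades a commuting factorization $G=HK$ to a direct product by observing that $H\cap K$ is central, hence trivial. The only difference is that you spell out the normality of $H$ and $K$ and the trivial-group edge case, details the paper leaves implicit.
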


\begin{proof}
Suppose that $G$ is unfactorizable. Since $G=GC$ for the center subgroup $C,$
we see that $C=1.$ An unfactorizable group is obviously indecomposable.
Conversely, assume that $G$ is centerless and indecomposable. If $G=KH$ for
commuting $K,H.$ Then any element $g\in K\cap H$ is central and thus
trivial. This implies that $G=K\times H.$ Therefore, either $K$ or $H$ is
trivial.
\end{proof}

For a direct product $G=G_1\times \cdots \times G_n$, we collect together
the coordinates corresponding to isomorphic $G_i$'s and present it in the
form $G=G_1^{\,n_1}\times \cdots \times G_m^{\,n_m}$, where $n_i\geq 1$ and $%
G_i\ncong G_j$ for $1\leq i\neq j\leq m$. Given automorphisms $\phi_i\in
\mathrm{Aut}(G_i)$ for $i=1,\ldots ,n$, let $\prod_{i=1}^n \phi_i
=\phi_1\times \cdots \times \phi_n \colon G\to G$, $(g_1,\ldots ,g_n)\mapsto
(\phi_1(g_1), \ldots ,\phi_n(g_n))$ be the product of $\phi_i$'s. We have an
analogous result of \cite[Proposition 4.4]{ZVW} as follows.

\begin{prop}
\label{aut of prouct group} Let each group $G_i, i=1,\ldots,m $, be
unfactorizable, and $G=G_1^{\,n_1}\times \cdots \times G_m^{\,n_m}$ a direct
product, where $m\geq 1$, $n_i\geq 1$, and $G_i\ncong G_j$ for $i\neq j$.
Then for every $\phi \in \mathrm{Aut}(G)$, there exist automorphisms $%
\phi_{i,j}\in \mathrm{Aut}(G_i)$ and permutations $\sigma_i \in S_{n_i}$,
such that
\begin{equation*}
\phi =\sigma_1 \comp\cdots \comp \sigma_m \comp (\prod_{i=1}^m
\prod_{j=1}^{n_i} \phi_{i,j}) = \prod_{i=1}^m (\sigma_i \comp %
\prod_{j=1}^{n_i} \phi_{i,j}).
\end{equation*}
\end{prop}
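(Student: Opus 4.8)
The plan is to show that $\phi$ must permute the coordinate subgroups of $G$ and act by an isomorphism on each, after which the non-isomorphism of distinct blocks forces the permutation to stay inside each block. Write the given decomposition as $G=\prod_{\alpha}H_\alpha$, where $\alpha$ ranges over the finite index set $\{(i,j):1\le i\le m,\ 1\le j\le n_i\}$ and $H_{(i,j)}$ denotes the $j$-th copy of $G_i$; let $\pi_\alpha\colon G\to H_\alpha$ be the coordinate projection, with kernel $\widehat H_\alpha=\prod_{\gamma\neq\alpha}H_\gamma$. Since each $G_i$ is unfactorizable, the second lemma shows each is centerless, so $G$ is centerless and every $H_\alpha$ is again unfactorizable.

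The technical heart is the following rigidity statement: \emph{every directly indecomposable direct factor $H$ of $G$ equals one of the $H_\alpha$.} To prove it, write $G=H\times K$ and fix $\alpha$. Applying $\pi_\alpha$ gives $H_\alpha=\pi_\alpha(H)\,\pi_\alpha(K)$, and since $[H,K]=1$ the subgroups $\pi_\alpha(H)$ and $\pi_\alpha(K)$ commute elementwise; as $H_\alpha$ is unfactorizable, exactly one of $\pi_\alpha(H),\pi_\alpha(K)$ is trivial (not both, since their product is $H_\alpha\neq1$). This partitions the index set as $P\sqcup Q$, where $\pi_\alpha(K)=1$ for $\alpha\in P$ and $\pi_\alpha(H)=1$ for $\alpha\in Q$. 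For $\alpha\in Q$ we get $H\subseteq\widehat H_\alpha$, hence $H\subseteq\bigcap_{\alpha\in Q}\widehat H_\alpha=\prod_{\alpha\in P}H_\alpha=:G_P$, and symmetrically $K\subseteq G_Q:=\prod_{\alpha\in Q}H_\alpha$. Since $G=G_P\times G_Q$ while $H\le G_P$, $K\le G_Q$ and $HK=G$, comparing the two factorizations forces $H=G_P$ and $K=G_Q$ (any $g\in G_P=HK$ writes as $hk$ with $k=h^{-1}g\in G_P\cap G_Q=1$); indecomposability of $H$ then gives $|P|=1$, i.e.\ $H=H_\alpha$ for a single $\alpha$.

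With this in hand I apply $\phi$. Each $H_\alpha$ is a direct factor, so $\phi(H_\alpha)$ is again a directly indecomposable direct factor of $G$, whence $\phi(H_\alpha)=H_{\tau(\alpha)}$ for a well-defined index $\tau(\alpha)$; as $\phi$ is bijective and the index set is finite, $\tau$ is a permutation. The restriction $\phi|_{H_\alpha}\colon H_\alpha\xrightarrow{\sim}H_{\tau(\alpha)}$ is an isomorphism, so writing $\alpha=(i,j)$ and $\tau(\alpha)=(i',j')$ we obtain $G_i\cong G_{i'}$; by hypothesis this forces $i=i'$, so $\tau$ preserves each block and restricts to a permutation $\sigma_i\in S_{n_i}$ of the copies of $G_i$, that is, $\tau(i,j)=(i,\sigma_i(j))$.

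Finally I assemble the factorization. Identifying every copy $H_{i,j}$ with $G_i$, the isomorphism $\phi|_{H_{i,j}}\colon H_{i,j}\to H_{i,\sigma_i(j)}$ becomes an automorphism $\phi_{i,j}\in\mathrm{Aut}(G_i)$, and unravelling the identifications shows that $\phi$ first applies $\phi_{i,j}$ coordinatewise and then permutes the coordinates within each block by $\sigma_i$, which is exactly $\phi=\prod_{i=1}^m\bigl(\sigma_i\circ\prod_{j=1}^{n_i}\phi_{i,j}\bigr)$. I expect the rigidity statement of the second paragraph (the commutator–projection dichotomy and the deduction $H=G_P$) to be the crux; the remaining points—bijectivity of $\tau$, block preservation, and keeping track of the order of $\sigma_i$ versus $\prod_j\phi_{i,j}$—are routine bookkeeping. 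This parallels the argument for the analogous \cite[Proposition 4.4]{ZVW}.
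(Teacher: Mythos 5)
Your proof is correct, and it is organized genuinely differently from the paper's. The paper argues directly on coordinates: for $G=G_1\times G_2$ it forms the projections $A_k,B_k$ of the images $\phi(G_1\times 1)$ and $\phi(1\times G_2)$, uses elementwise commutation plus unfactorizability to force each projection to be trivial or everything, reads off the two cases (preserve the factors or swap them), and then simply asserts that ``the same argument'' handles more than two factors. You package the same engine---projections of commuting subgroups plus the unfactorizability dichotomy---into a stand-alone rigidity lemma: every directly indecomposable direct factor of $G$ equals one of the coordinate subgroups $H_\alpha$. You then apply it to $\phi(H_\alpha)$, which is a direct factor because $G=\phi(H_\alpha)\times\phi(\widehat H_\alpha)$, to get the permutation $\tau$, and the hypothesis $G_i\ncong G_j$ forces $\tau$ to preserve blocks. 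What your route buys: it treats an arbitrary number of factors uniformly, so it actually supplies the general-case details that the paper leaves implicit, and the rigidity lemma is a reusable Krull--Remak--Schmidt-type uniqueness statement for products of centerless indecomposable groups. What the paper's route buys: for two factors it is shorter and needs no auxiliary lemma. One shared, harmless caveat: both arguments implicitly assume each $G_i$ is nontrivial (the trivial group is vacuously unfactorizable under the paper's definition); your step ``exactly one of $\pi_\alpha(H),\pi_\alpha(K)$ is trivial'' needs $H_\alpha\neq 1$, but trivial factors can be discarded without loss of generality.
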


\begin{proof}
For brevity, we first assume that $G=G_1\times G_2$ (here $G_1$ may be
isomorphic to $G_2$), and $\phi: G_1\times G_2\to G_1\times G_2$ an
automorphism. Let
\begin{equation*}
A_1=\{a_{1}|\phi(g_{1},1)=(a_{1},b_{1}),g_1\in
G_1\},~~B_1=\{b_{1}|\phi(g_{1},1)=(a_{1},b_{1}),g_1\in G_1\},
\end{equation*}
and
\begin{equation*}
A_2=\{a_{2}|\phi(1,g_{2} )=(a_{2},b_{2}),g_2\in
G_2\},~~B_2=\{b_{2}|\phi(1,g_{2} )=(a_{2},b_{2}),g_2\in G_2\}.
\end{equation*}
Then $G_1=A_1A_2$, $a_{1}a_{2}=a_{2}a_{1}$ for every $a_i\in A_i$; and $%
G_2=B_1B_2$, $b_{1}b_{2}=b_{2}b_{1}$ for every $b_i\in B_i$. Since $G_{1},
G_2$ are unfactorizable, we see that either $A_1$ or $A_2$ is trivial, and
either $B_1$ or $B_2$ is trivial.

If $A_1=G_1,A_2=1$, then $B_1=1, B_2=G_2$, and we have
\begin{equation*}
\phi=\phi_1\times \phi_2: G_1\times G_2\to G_1\times G_2,\quad
(g_1,g_2)\mapsto (\phi_1(g_1),\phi_2(g_2))=(a_1,b_2)
\end{equation*}
where $\phi_i\in \mathrm{Aut}(G_i)$.

If $A_1=1, A_2=G_1$, then $B_1=G_2, B_2=1$, then we have $G_1\cong G_2$, and
\begin{equation*}
\phi=\sigma\comp (\phi_1\times \phi_2): G_1\times G_2\to G_1\times G_2,\quad
(g_1,g_2)\mapsto (\phi_2(g_2),\phi_1(g_1))=(a_2, b_1)
\end{equation*}
for $\phi_i\in \mathrm{Aut}(G_i)$ and $\sigma\in S_2$ a permutation.

Now we have proved that Proposition \ref{aut of prouct group} holds for the
case $G=G_1\times G_2$. For the general case that $G=\prod_i G_i$ has more
than 2 factors, the same argument as above shows that Proposition \ref{aut
of prouct group} also holds.
\end{proof}


Since the inner automorphism group $\mathrm{Inn}(\prod_i G_i)$ of a product
is isomorphic to the product $\prod_i \mathrm{Inn}(G_i)$, summarizing the
above results, we have proved the following:

\begin{thm}
\label{aut and out of product groups} Let $G_1,\ldots, G_m $ be finitely
many centerless indecomposable groups, and $G_i\ncong G_j$ for $i\neq j$.
Then the automorphism group of the product $\prod_{i=1}^m G_i^{n_i}$
\begin{equation*}
\mathrm{Aut}(\prod_{i=1}^m G_i^{n_i})\cong \prod_{i=1}^m\bigg(\prod_{n_i}%
\mathrm{Aut}(G_{i})\bigg)\rtimes S_{n_i},
\end{equation*}
and the outer automorphism group
\begin{equation*}
\mathrm{Out}(\prod_{i=1}^m G_i^{n_i})\cong \prod_{i=1}^m\bigg(\prod_{n_i}%
\mathrm{Out}(G_{i})\bigg)\rtimes S_{n_i},
\end{equation*}
where the symmetric group $S_{n_i}$ of $n_i\geq 1$ elements acts on $%
\prod_{n_i}\mathrm{Aut}(G_{i})$ and $\prod_{n_i}\mathrm{Out}(G_{i})$ by
natural permutations.
\end{thm}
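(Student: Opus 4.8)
The plan is to exhibit one explicit isomorphism and then read off both formulas from Proposition \ref{aut of prouct group}. First note that, by the preceding Lemma, a group is centerless and indecomposable precisely when it is unfactorizable; hence each $G_i$ is unfactorizable and the hypotheses of Proposition \ref{aut of prouct group} are met for $G=\prod_{i=1}^m G_i^{n_i}$. I would then define the comparison map
$$\Psi\colon \prod_{i=1}^m\Big(\big(\prod_{n_i}\mathrm{Aut}(G_i)\big)\rtimes S_{n_i}\Big)\To \mathrm{Aut}(G),\quad \big((\phi_{i,j})_j,\sigma_i\big)_i\mapsto \prod_{i=1}^m\Big(\sigma_i\circ\prod_{j=1}^{n_i}\phi_{i,j}\Big),$$
where $S_{n_i}$ acts on $\prod_{n_i}\mathrm{Aut}(G_i)$ by permuting the coordinates, so that the domain is the product of the wreath products $\mathrm{Aut}(G_i)\wr S_{n_i}$.

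Surjectivity of $\Psi$ is exactly the content of Proposition \ref{aut of prouct group}. For injectivity I would show that the decomposition produced there is unique: since $G_i\ncong G_j$ for $i\neq j$, every automorphism of $G$ must carry each isotypic block $G_i^{\,n_i}$ onto itself, and within such a block the images of the $n_i$ factor subgroups $1\times\cdots\times G_i\times\cdots\times 1$ are again factor subgroups, which pins down the permutation $\sigma_i$ uniquely and then determines each $\phi_{i,j}\in\mathrm{Aut}(G_i)$ as the induced isomorphism between the corresponding factors. That $\Psi$ is a homomorphism follows from a direct computation of $\big(\sigma\circ\prod_j\phi_j\big)\circ\big(\tau\circ\prod_j\psi_j\big)$: the permutation parts multiply to $\sigma\tau$, while the $j$-th automorphism slot becomes a composite $\phi\circ\psi$ with indices shifted by $\tau$, which is precisely the multiplication rule of the semidirect (wreath) product. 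This establishes the first isomorphism.

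For the second, I would pass to outer automorphisms using $\mathrm{Inn}(G)\cong\prod_i\prod_{n_i}\mathrm{Inn}(G_i)$. Under $\Psi$ the inner automorphisms correspond to the base subgroup with trivial permutation part, since conjugation by $(g_{i,j})_{i,j}$ acts coordinatewise as $x\mapsto g_{i,j}\,x\,g_{i,j}^{-1}$ and involves no permutation; moreover $\mathrm{Inn}(G)\cong G$ because $G$ is centerless. As $\mathrm{Inn}(G_i)\trianglelefteq\mathrm{Aut}(G_i)$ and the action of $S_{n_i}$ permutes the $\mathrm{Inn}(G_i)$-factors among themselves, the normal subgroup $\mathrm{Inn}(G)$ is invariant under the semidirect-product structure, so forming the quotient is compatible with it and
$$\mathrm{Out}(G)=\mathrm{Aut}(G)/\mathrm{Inn}(G)\cong \prod_{i=1}^m\Big(\big(\prod_{n_i}\mathrm{Out}(G_i)\big)\rtimes S_{n_i}\Big).$$
The main point to get right is the wreath-product group law, namely verifying that $\Psi$ respects composition and that the permutation part of an inner automorphism is trivial, together with the uniqueness of the block decomposition that yields injectivity; once these are in place, both isomorphisms follow formally.
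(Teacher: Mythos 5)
Your proposal is correct and follows essentially the same route as the paper, which likewise deduces the theorem by combining Proposition \ref{aut of prouct group} (your surjectivity step) with the observation that $\mathrm{Inn}(\prod_i G_i)\cong\prod_i\mathrm{Inn}(G_i)$ (your passage to $\mathrm{Out}$). The paper leaves the homomorphism/injectivity verifications and the compatibility of the quotient with the semidirect-product structure implicit ("summarizing the above results"), and you have simply supplied those details explicitly.
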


There are many examples of centerless indecomposable groups.

\begin{example}
Non-abelian free groups are centerless indecomposable groups.
\end{example}

\begin{example}\label{centerless of negatived mfd}
Non-abelian torsion-free Gromov hyperbolic groups are centerless indecomposable groups.
In particular, the fundamental group of a closed Riemannian manifold with negative
sectional curvature everywhere is centerless and indecomposable.
\end{example}

\begin{proof}
Let $G$ be a non-abelian torsion-free Gromov hyperbolic group and $1\neq \gamma \in G.$
It is well-known that the centralizer subgroup $C(\gamma )=\{g\in G\mid
g\gamma =\gamma g\}$ contains $\langle \gamma \rangle $ as a finite-index
subgroup (see \cite{bh}, Corollary 3.10, p.462). Therefore, the center of $G$
is trivial and $G$ is indecomposable.
\end{proof}


\section{Facts about the bounded index properties}\label{sect 3}

In this section, we give some facts about BIP, BIPHE and BIPH. In order to
state results conveniently, we will use the following definition.

\begin{definition}
Let $X$ be a compact polyhedron and $C$ be a family of self-maps
of $X.$ We call that $X$ has the \emph{Bounded Index Property with
respect to} $C$ (denoted by $\mathrm{BIPC}$) if there exists an integer $\B>0$
(depending only on $C$) such that for any map $f\in C$ and any fixed point
class $\mathbf{F}$ of $f$, the index $|\mathrm{ind}(f,\mathbf{F})|\leq \B$.
The minimum such a $\B$ is called the bounded index for $C$.
\end{definition}

When $C$ is the set of all self-maps (self-homeomorphisms, self-homotopy equivalences, respectively), we simply denote the $\mathrm{BIPC}$
by BIP (BIPH, BIPHE, respectively). It is obvious that $\mathrm{BIPC}_{1}$
implies $\mathrm{BIPC}_{2}$ when $C_{2}$ is a subset of $C_{1}.$

For maps of polyhedra, Jiang gave the following definition of mutant, and showed that the Nielsen fixed
point invariants are invariants of mutants (see \cite[Sect. 1]{fp2}).

\begin{definition}
Let $f:X\rightarrow X$ and $g:Y\rightarrow Y$ be self-maps of compact
connected polyhedra. We say $g$ is obtained from $f$ by \emph{commutation}, if there exist maps $\phi :X\rightarrow Y$ and $\psi
:Y\rightarrow X$ such that $f=\psi \comp \phi $  and $g=\phi \comp
\psi $. 
Say $g$ is a \emph{mutant} of $f$, if there is a finite sequence $\{f_i : X_i\to X_i | i = 1,2,\cdots,k\}$ of self-maps of compact polyhedra such that  $f=f_1$, $g=f_k$, and for each $i$, either
$X_{i+1}=X_i$ and $f_{i+1}\simeq f_i$, or
$f_{i+1}$ is obtained from $f_i$ by commutation.
\end{definition}

\begin{lem}[Jiang, \cite{fp2}]\label{mutant invar.}
Mutants have the same set of indices of essential fixed point classes,
hence also the same Lefschetz number and Nielsen number.
\end{lem}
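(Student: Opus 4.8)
The plan is to verify the invariance separately for the two elementary moves that generate the mutant relation—homotopy of self-maps and commutation—since a mutant is obtained from $f$ by a finite chain of such moves, and it suffices to show that each link preserves the set of indices of essential fixed point classes.

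The homotopy move is immediate. If $f_{i+1}\simeq f_i$ on the same polyhedron, then the fundamental homotopy invariance of Nielsen fixed point theory furnishes a bijection between the fixed point classes of $f_i$ and those of $f_{i+1}$ under which the index $\ind(\cdot,\cdot)$ is preserved; in particular the two maps carry the same multiset of indices of essential classes. No new work is needed here.

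The substance is the commutation move. Suppose $g=\phi\circ\psi$ is obtained from $f=\psi\circ\phi$ via maps $\phi\colon X\to Y$ and $\psi\colon Y\to X$. First I would record the pointwise correspondence: if $x\in\fix(f)$ then $g(\phi(x))=\phi(\psi\phi(x))=\phi(f(x))=\phi(x)$, so $\phi$ carries $\fix(f)$ into $\fix(g)$; symmetrically $\psi$ carries $\fix(g)$ into $\fix(f)$; and $\psi\phi$ restricts to the identity on $\fix(f)$ while $\phi\psi$ restricts to the identity on $\fix(g)$. To see that $\phi$ respects the class relation, recall that $a,a'\in\fix(f)$ lie in one class exactly when there is a path $c$ from $a$ to $a'$ with $c\simeq f\circ c$ rel endpoints; then $\phi\circ c$ joins $\phi(a)$ to $\phi(a')$ and satisfies $\phi\circ c\simeq\phi\circ f\circ c=g\circ(\phi\circ c)$, so $\phi(a),\phi(a')$ lie in one class of $g$. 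Thus $\phi$ induces a well-defined map $\mathrm{Fpc}(f)\to\mathrm{Fpc}(g)$ and $\psi$ one in the reverse direction; since $\psi\phi$ and $\phi\psi$ act as the identity on the respective fixed point sets, these induced maps are mutually inverse bijections. To match indices class by class I would then invoke the commutativity property of the fixed point index—one of its standard axioms, alongside additivity, homotopy invariance, and normalization—which gives $\ind(\psi\phi,\,U)=\ind(\phi\psi,\,\phi(U))$ for an isolated set $U$ of fixed points of $\psi\phi$. Applying this to each class $\F$ of $f$ and its image class $\phi(\F)$ of $g$ yields $\ind(f,\F)=\ind(g,\phi(\F))$, so $f$ and $g$ share the same set of indices of essential fixed point classes.

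Consequently every elementary move preserves this set, hence so does any finite chain of them, and a mutant inherits the set of indices from the original map. Since the Nielsen number counts the essential classes and, by the Lefschetz--Hopf theorem, the Lefschetz number is the sum of all indices, both are preserved as well. I expect the only genuine obstacle to be the commutation step: arranging the neighborhoods so that the commutativity axiom of the index applies to the corresponding isolated fixed point sets, and checking that this local identity is compatible with the class bijection induced by $\phi$. The homotopy step and the passage to the Lefschetz and Nielsen numbers are routine once the class-by-class index equality is in hand.
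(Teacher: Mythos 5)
The paper does not prove this lemma; it quotes it from Jiang \cite{fp2}, and your argument is essentially Jiang's own: homotopy invariance of the index handles the first elementary move, and for commutation one uses the commutativity property of the fixed point index together with the observation that $\phi$ and $\psi$ restrict to mutually inverse, class-preserving bijections between $\mathrm{Fix}(\psi\circ\phi)$ and $\mathrm{Fix}(\phi\circ\psi)$. Your proof is correct; the one loose formulation is $\mathrm{ind}(\psi\circ\phi,U)=\mathrm{ind}(\phi\circ\psi,\phi(U))$, since $\phi(U)$ need not be open---the standard fix is to isolate a nonempty class $\mathbf{F}$ of $f$ by an open set $U\subset X$ and its image class $\phi(\mathbf{F})$ of $g$ by an open set $V\subset Y$, then apply commutativity to the restrictions $\phi|_U$ and $\psi|_V$, which yields $\mathrm{ind}\bigl(\psi\circ\phi,\,U\cap\phi^{-1}(V)\bigr)=\mathrm{ind}\bigl(\phi\circ\psi,\,V\cap\psi^{-1}(U)\bigr)$, i.e.\ $\mathrm{ind}(f,\mathbf{F})=\mathrm{ind}(g,\phi(\mathbf{F}))$.
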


Note that mutants give an equivalence relation on self-maps of compact polyhedra. In other words, two
self-maps $f:X\to X$ and $g:Y\to Y$ are mutant-equivalent if there exist finitely many maps of compact polyhedra $X_1=X,X_2,\ldots, X_k=Y,$
$$u_{i}:X_i\rightarrow X_{i+1},\quad v_{i}:X_{i+1}\rightarrow X_i,\quad\quad i=1,2,\cdots ,k-1,$$
such that $f\simeq v_{1}\comp u_{1}: X_1\to X_1, ~~g\simeq u_{k-1}\comp v_{k-1}:X_k\to X_k,$ and for
$i<k-1$,
$$u_{i}\comp v_{i}\simeq v_{i+1}\comp u_{i+1}.$$

For a self-map $f:X\to X$  of a connected compact polyhedron $X$, let $[f]_m$ denote the mutant-equivalent class of $f$, and for a family $C$ of self-maps of $X$, let $[C]_m:=\{[f]_m|f\in C\}$ be the set of
mutant-equivalent classes of $C$. Note that $f$
has only finitely many non-empty fixed point classes, and each is a compact
subset of $X$, we have a finite bound $\mathcal{B}_{f}$ of $\mathrm{ind}(f,%
\mathbf{F})$ for all the fixed point classes $\mathbf{F}$ of $f$.
As an immediate consequence of Lemma \ref{mutant invar.}, we have the following:

\begin{prop}\label{ind for muatant}
\label{BIP of homotopy type} For any family $C$ of self-maps of a connected compact polyhedron $X$, the set $\{\ind(f,\F)|f\in C, \F\in \fpc(f)\}$ defined on $C$ factors through the equivalence classes $[C]_{m}$. Namely, for any set $[C]_m$ of mutant-equivalent classes, we have a set of indices
$$\ind([C]_m):=\{\ind(f,\F)|f\in C, \F\in \fpc(f)\}$$
depending only on $[C]_m$. Moreover, if $[C]_m$ is finite, then $X$ has BIPC.
\end{prop}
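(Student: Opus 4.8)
The plan is to read the statement off of Lemma \ref{mutant invar.}, with a little bookkeeping. The first step is to verify that the assignment $f\mapsto\{\ind(f,\F)\mid \F\in\fpc(f)\}$ is constant on mutant-equivalence classes. If $[f]_m=[g]_m$, then $f$ and $g$ are mutant-equivalent, and Lemma \ref{mutant invar.} guarantees that they carry the same set of indices of essential fixed point classes. Since a fixed point class is essential precisely when its index is nonzero, this means the nonzero parts of the two index sets agree; the only possible discrepancy is the value $0$ coming from inessential (but nonempty) classes, which plays no role in the bound we are after. Hence, up to this harmless value, the set $\{\ind(f,\F)\mid\F\in\fpc(f)\}$ depends only on $[f]_m$, i.e.\ the assignment factors through $[C]_m$.

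Second, I would set $\ind([C]_m):=\bigcup_{f\in C}\{\ind(f,\F)\mid\F\in\fpc(f)\}$ and rewrite this union as one indexed by the distinct classes appearing in $[C]_m$, choosing a single representative for each; by the first step this is well defined and depends only on $[C]_m$, not on the chosen representatives.

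For the final clause, recall from the discussion preceding the statement that every self-map $f$ of the compact polyhedron $X$ has only finitely many nonempty fixed point classes, so each $\{\ind(f,\F)\mid\F\in\fpc(f)\}$ is a finite set of integers with a finite bound $\B_f$. When $[C]_m=\{[f_1]_m,\ldots,[f_k]_m\}$ is finite, $\ind([C]_m)$ is the union of the $k$ finite sets attached to the $f_i$, hence finite; putting $\B:=\max\{|n|\mid n\in\ind([C]_m)\}$ then bounds $|\ind(f,\F)|$ uniformly over all $f\in C$ and all $\F\in\fpc(f)$, which is exactly BIPC.

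I do not expect a genuine obstacle here, since the whole content is transported through Lemma \ref{mutant invar.}. The only points needing care are the cosmetic issue of the index value $0$ contributed by inessential classes (which I dispose of by noting it is irrelevant to any index bound) and the observation that finiteness of the \emph{quotient} $[C]_m$, rather than of $C$ itself, is what forces the total index set to be finite.
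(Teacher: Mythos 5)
Your proof is correct and follows essentially the same route as the paper, which states this proposition as an immediate consequence of Lemma \ref{mutant invar.} together with the preceding observation that each self-map has only finitely many non-empty fixed point classes, hence a finite bound $\mathcal{B}_f$, and then takes a maximum over finitely many mutant classes. Your extra care about the index value $0$ from inessential classes is a point the paper glosses over entirely, and you dispose of it correctly.
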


Suppose that $X$ is aspherical and let $[X]$ (resp. $[X]_{h.e}$) be the set
of all homotopy classes of self-maps (resp. self-homotopy equivalences) of $X$%
. It is well-known that there is a bijective correspondence
\begin{equation*}
\eta:[X]\longleftrightarrow \mathrm{End}(\pi _{1}(X))/\mathrm{Inn}(\pi _{1}(X)),
\end{equation*}%
given by sending a self-map $f$ to the induced endomorphism $f_{\pi}$ of the fundamental group, where the inner
automorphism group $\mathrm{Inn}(\pi _{1}(X))$ acts on the semigroup $%
\mathrm{End}(\pi _{1}(X))$ of all the endomorphisms of $\pi _{1}(X)$ by
composition. Note that $\eta$ induces a bijection (still denoted by $\eta$)
\begin{equation*}
\eta: \lbrack X]_{h.e}\longleftrightarrow \mathrm{Out}(\pi _{1}(X)).
\end{equation*}%

For any self-map $f$ of $X$, let $[f]$ denote the homotopy class of $f$, and for any family $C$ of self-homotopy equivalences of $X$, let $[C]:=\{[f]|f\in C\}$ be
the set of homotopy classes of $C$. Then the image $\eta([C])$ is a subset of $\mathrm{Out}%
(\pi _{1}(X)).$ For a group $G$ and a subset $H$ of $G$, two elements $g,h\in H$
are conjugate if there exists an element $k\in G$ such that $g=khk^{-1}.$ Let $\bar{h}$ be the conjugacy class of $h$ in $G$, and $\mathrm{Conj}_{G}H:=\{\bar h|h\in H\}$ be the set of conjugacy classes of $H$ in $G$. We have the following:

\begin{lem}\label{finite out implies BIPH}
Let $X$ be a connected compact aspherical polyhedron. Suppose that $C$ is a family of self-homotopy equivalences of $X. $
Then we have a natural surjection
$$\Phi:~~ \mathrm{Conj}_{\mathrm{Out}(\pi_{1}(X))}\eta([C])\longrightarrow [C]_m, \quad defined~by\quad \overline{\eta([f])}\longmapsto [f]_m.$$
Moreover, if the set $\mathrm{Conj}_{\mathrm{Out}(\pi_{1}(X))}\eta([C])$ is finite,
then $[C]_m$ is also finite and hence $X$ has BIPC. In particular, when $\mathrm{Out}(\pi _{1}(X))$
has finitely many conjugacy classes, the polyhedron $X$ has the bounded index property with respect to
the set of all self-homotopy equivalences, i.e., $X$ has BIPHE.
\end{lem}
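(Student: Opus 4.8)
The plan is to prove well-definedness of $\Phi$ first, after which surjectivity and both consequences follow formally from the material already in hand. Throughout I use that $\eta\colon [X]_{h.e}\to\Out(\pi_1(X))$ is not just a bijection but an isomorphism of groups, the operation on $[X]_{h.e}$ being composition: this holds because $(f\comp g)_\pi=f_\pi\comp g_\pi$ on $\pi_1(X)$ modulo inner automorphisms, so that $\eta([f\comp g])=\eta([f])\,\eta([g])$.

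To set up well-definedness, recall that an element of $\mathrm{Conj}_{\Out(\pi_1(X))}\eta([C])$ is a conjugacy class represented by $\eta([f])$ for some $f\in C$, and two such classes agree precisely when $\eta([f])$ and $\eta([g])$, with $f,g\in C$, are conjugate in $\Out(\pi_1(X))$. Thus it suffices to prove the single implication: if $\eta([f])$ and $\eta([g])$ are conjugate in $\Out(\pi_1(X))$, then $[f]_m=[g]_m$. (The degenerate case $\eta([f])=\eta([g])$ already gives $[f]=[g]$ by injectivity of $\eta$, hence $f\simeq g$ and $[f]_m=[g]_m$.)

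For the key step, write $\eta([f])=\theta\,\eta([g])\,\theta^{-1}$ with $\theta\in\Out(\pi_1(X))$. Using surjectivity of $\eta$, pick a self-homotopy equivalence $h\colon X\to X$ with $\eta([h])=\theta$ and a homotopy inverse $h'$, so that $h\comp h'\simeq\mathrm{id}_X\simeq h'\comp h$ and $\eta([h'])=\theta^{-1}$. Since $\eta$ is a homomorphism,
\begin{equation*}
\eta([h\comp g\comp h'])=\theta\,\eta([g])\,\theta^{-1}=\eta([f]),
\end{equation*}
and injectivity of $\eta$ gives $f\simeq h\comp g\comp h'$. Now set $\psi=h$ and $\phi=g\comp h'$, both self-maps of $X$; then $\psi\comp\phi=h\comp g\comp h'$ while $\phi\comp\psi=g\comp h'\comp h\simeq g$, so $h\comp g\comp h'$ and $g\comp h'\comp h$ are related by commutation. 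The chain
\begin{equation*}
f\;\simeq\;h\comp g\comp h'\;\rightsquigarrow\;g\comp h'\comp h\;\simeq\;g
\end{equation*}
(homotopy, commutation, homotopy) therefore exhibits $f$ and $g$ as mutant-equivalent, i.e. $[f]_m=[g]_m$, proving $\Phi$ well defined. Surjectivity is immediate, as every class in $[C]_m$ equals $[f]_m=\Phi(\overline{\eta([f])})$ for some $f\in C$.

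It remains to read off the consequences. If $\mathrm{Conj}_{\Out(\pi_1(X))}\eta([C])$ is finite, then $[C]_m$, being the image of a finite set under the surjection $\Phi$, is finite, and Proposition \ref{ind for muatant} immediately yields that $X$ has BIPC. Taking $C$ to be the full set of self-homotopy equivalences gives $\eta([C])=\Out(\pi_1(X))$, so the relevant set is exactly the set of conjugacy classes of $\Out(\pi_1(X))$; when this is finite we obtain BIPHE. The only genuine content is the key step, whose role is to convert algebraic conjugacy in $\Out(\pi_1(X))$ into a topological mutant move. The one point to watch is that the conjugating equivalence $h$ need not belong to $C$; this causes no trouble, since the definition of mutant equivalence only requires the intermediate maps to be self-maps of compact polyhedra, which $h$, $h'$, and $g\comp h'$ all are.
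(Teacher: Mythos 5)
Your proof is correct and follows essentially the same route as the paper's: both convert conjugacy in $\mathrm{Out}(\pi_1(X))$ into a mutant equivalence by realizing the conjugator as a homotopy equivalence $h$ with homotopy inverse $h'$, and then using the commutation move between $h\comp(g\comp h')$ and $(g\comp h')\comp h$. Your write-up is merely more explicit about $\eta$ being a group homomorphism, the exact chain of homotopy/commutation steps, and the (harmless) fact that $h$ need not lie in $C$.
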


\begin{proof}
For self-homotopy equivalences $f, g\in C$,  if $\overline{\eta([f])}=\overline{\eta([g])}$, then there exists $\eta([h])=\eta([h'])^{-1}\in \out(\pi_1(X))$ for $h:X\to X$ a homotopy equivalence and  $h': X\to X$ a homotopy inverse of $h$ such that
$$\eta([g])=\eta([h])\cdot\eta([f])\cdot\eta([h])^{-1}=\eta([h])\cdot\eta([f])\cdot\eta([h'])\in \out(\pi_1(X)).$$
This implies that $g\simeq h\comp (f\comp h')$. Note that $f\simeq (f\comp h')\comp h$, so we have $[f]_m=[g]_m$. Therefore, $\Phi$ is well-defined. It is obvious from the definition that $\Phi$ is surjective, and the proof is finished by Proposition \ref{ind for muatant}.
\end{proof}

\begin{thm}
\label{main thm3} Let $M=M_1\times\cdots\times M_m$ be a product of
finitely many connected closed Riemannian manifolds, each with negative
sectional curvature everywhere, and with (not necessarily the same)
dimension $\geq 3.$ Then $M$ has BIPHE.
\end{thm}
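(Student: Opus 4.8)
The plan is to reduce Theorem \ref{main thm3} to the group-theoretic machinery assembled in Sections \ref{sect 2} and \ref{sect 3}, exploiting that each factor $M_i$ is aspherical (being negatively curved, it is a $K(\pi,1)$ by the Cartan--Hadamard theorem) so that the product $M$ is aspherical with $\pi_1(M)=\prod_i \pi_1(M_i)$. By Example \ref{centerless of negatived mfd}, each $G_i:=\pi_1(M_i)$ is centerless and indecomposable, hence unfactorizable. Collecting isomorphic factors, write $\pi_1(M)=\prod_{i=1}^k H_i^{\,n_i}$ with the $H_i$ pairwise non-isomorphic unfactorizable groups. The strategy is to invoke Lemma \ref{finite out implies BIPH}: it suffices to show that $\mathrm{Conj}_{\mathrm{Out}(\pi_1(M))}\eta([C])$ is finite for $C$ the family of all self-homotopy equivalences, which by that lemma reduces to controlling the conjugacy classes in $\mathrm{Out}(\pi_1(M))$ that actually arise, and then applying Proposition \ref{ind for muatant} to get a finite set of indices.

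First I would apply Theorem \ref{aut and out of product groups} to obtain the explicit decomposition
\begin{equation*}
\mathrm{Out}(\pi_1(M))\cong \prod_{i=1}^k\bigg(\prod_{n_i}\mathrm{Out}(H_i)\bigg)\rtimes S_{n_i}.
\end{equation*}
An outer automorphism of $M$ therefore permutes the isomorphic factors within each block and acts on each factor by some element of $\mathrm{Out}(H_i)$. The key point is that where $\dim M_i\geq 3$, Mostow rigidity applies: every homotopy equivalence $M_i\to M_i$ is homotopic to an isometry, so $\mathrm{Out}(H_i)\cong\mathrm{Isom}(M_i)$, which is a \emph{finite} group. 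Consequently each block $\big(\prod_{n_i}\mathrm{Out}(H_i)\big)\rtimes S_{n_i}$ is finite, and hence $\mathrm{Out}(\pi_1(M))$ is a finite group. A finite group has only finitely many conjugacy classes, so $\mathrm{Conj}_{\mathrm{Out}(\pi_1(M))}\eta([C])$ is automatically finite.

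With that finiteness in hand, the second half of Lemma \ref{finite out implies BIPH} directly yields that $M$ has BIPHE, completing the proof. I would carry out the steps in the order: (i) record asphericity and the $\pi_1$ computation; (ii) identify each $\pi_1(M_i)$ as unfactorizable via Example \ref{centerless of negatived mfd}; (iii) invoke Theorem \ref{aut and out of product groups} for the structure of $\mathrm{Out}$; (iv) use Mostow rigidity in dimension $\geq 3$ to deduce each $\mathrm{Out}(H_i)$ is finite, hence $\mathrm{Out}(\pi_1(M))$ is finite; (v) conclude via Lemma \ref{finite out implies BIPH}.

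The main obstacle is step (iv): one must be careful that the hypothesis $\dim M_i\geq 3$ is exactly what licenses Mostow--Prasad rigidity (more precisely, the rigidity of closed negatively curved manifolds due to Mostow and its extensions), since in dimension $2$ the outer automorphism group is the infinite mapping class group and the argument breaks down — this is precisely why the two-dimensional case is handled separately via BIP rather than finiteness of $\mathrm{Out}$. A secondary technical point to verify is that the bijection $\eta$ and the semidirect-product identification of Theorem \ref{aut and out of product groups} interact correctly with the conjugacy-class count, but this is routine once finiteness of the full group $\mathrm{Out}(\pi_1(M))$ is established, so the essential content is entirely the rigidity-driven finiteness of $\mathrm{Out}(H_i)$.
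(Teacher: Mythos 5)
Your overall architecture coincides with the paper's proof: identify each $\pi_1(M_i)$ as centerless and indecomposable via Example \ref{centerless of negatived mfd}, apply Theorem \ref{aut and out of product groups} to get the block/semidirect-product structure of $\mathrm{Out}(\pi_1(M))$, deduce finiteness of that group from finiteness of each $\mathrm{Out}(H_i)$, and conclude with Lemma \ref{finite out implies BIPH}. The gap is in your step (iv), which is the mathematical heart of the theorem. Mostow rigidity does \emph{not} apply here: the hypothesis of the theorem is negative sectional curvature, not constant curvature, and Mostow's theorem (together with its genuine extensions, e.g.\ Besson--Courtois--Gallot) requires the relevant metric to be locally symmetric. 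For a general closed negatively curved manifold $M_i$ of dimension $\geq 3$ it is false that every self-homotopy-equivalence is homotopic to an isometry, and the isomorphism $\mathrm{Out}(H_i)\cong \mathrm{Isom}(M_i)$ you assert fails: perturb the metric on a closed hyperbolic $3$-manifold with nontrivial isometry group to a nearby negatively curved metric with trivial isometry group; $\mathrm{Out}(\pi_1)$ is unchanged and nontrivial, while $\mathrm{Isom}$ becomes trivial, so in particular the outer automorphisms are no longer realized by isometries.

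The conclusion of your step (iv) --- finiteness of $\mathrm{Out}(H_i)$ --- is nevertheless true, but by an entirely different mechanism, and this is exactly what the paper cites: the theorem of Rips and Sela \cite{RS}, building on Paulin's work on degenerations and actions on $\mathbb{R}$-trees, that $\mathrm{Out}(G)$ is finite when $G$ is the fundamental group of a closed negatively curved manifold of dimension $\geq 3$. (In hyperbolic-group language: such a $G$ is a torsion-free hyperbolic group which does not split over a virtually cyclic subgroup, because its Gromov boundary is a sphere of dimension $\geq 2$; Paulin/Rips--Sela then force $\mathrm{Out}(G)$ to be finite.) With that single substitution your argument becomes the paper's proof essentially verbatim; your remark that dimension $2$ must be excluded because surface mapping class groups are infinite is correct in spirit, but the input that dimension $\geq 3$ buys you is Paulin/Rips--Sela finiteness, not Mostow rigidity.
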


\begin{proof}
Rips and Sela \cite{RS} building on ideas of Paulin proved that $\mathrm{Out}%
(G)$ is a finite group when $G$ is the fundamental group of a closed
Riemannian manifolds of dimension $\geq 3$ with negative
sectional curvature everywhere. Since the factors $M_{i}^{n_{i}},i=1,\ldots ,m$, are closed Riemannian manifolds, each with
negative sectional curvature everywhere, and the dimensions $n_{i}\geq 3$,
we have that $\mathrm{Out}(\pi _{1}(M))$ is also finite by Theorem \ref{aut and out of product groups}. Therefore, $M$ has BIPHE
(and hence BIPH) by Lemma \ref{finite out implies BIPH}.
\end{proof}



\section{Fixed points of cyclic homeomorphisms of products of surfaces}\label{sect 4}

In this section, we will generalize the results of alternating
homeomorphisms (see \cite[Section 3]{ZZ}) to \emph{cyclic
homeomorphisms} of products of surfaces. Let $F$ be a connected closed hyperbolic surface, and
hence, the Euler characteristics $\chi(F)<0$.

\begin{defn}
A self-homeomorphism $f$ of $F^m:=\overbrace{F\times\cdots\times F}^m$, is
called a \emph{cyclic homeomorphism}, if
\begin{equation*}
f=\tau\comp (\prod_{i=1}^m f_i): F^m\to F^m, (a_1,a_2,\ldots,a_m)\mapsto
(f_m(a_m),f_1(a_1),\ldots,f_{m-1}(a_{m-1})),
\end{equation*}
where $f_1,\ldots,f_m$ are self-homeomorphisms of $F$, and $\tau=(12\cdots
m)\in S_m$ is a $m$-cycle.
\end{defn}

Note that for a compact hyperbolic surface, every homeomorphism is isotopic
to a diffeomorphism, then by the same argument as in the proof of \cite[Lemma
3.2]{ZZ}, we have

\begin{lem}
\label{transversal} Let $f_1,\ldots, f_m$ be self-homeomorphisms of $F$, and
$f=\tau\comp (\prod_{i=1}^m f_i): F^m\to F^m$ a cyclic homeomorphism. Then $%
f_1,\ldots, f_m$ can be isotoped to diffeomorphisms $g_1,\ldots, g_m$
respectively, such that the graph of the corresponding cyclic homeomorphism $%
g=\tau\comp (\prod_{i=1}^m g_i): F^m\to F^m$ is transversal to the
diagonal in $F^m$. Moreover, $f$ is homotopic to $g$ and, for each fixed
point $(a_1, a_2,\ldots, a_m)$ of $g$, there are charts of $F^m$ at $(a_1,
a_2,\ldots, a_m)$ such that under the charts, $g$ has a local canonical
form
\begin{eqnarray}
&&(u_{11}, u_{12}, u_{21}, u_{22},\ldots, u_{m1},u_{m2})  \notag \\
&\mapsto& (g_{m1}(u_{m1}, u_{m2}), g_{m2}(u_{m1}, u_{m2}),g_{11}(u_{11},
u_{12}), g_{12}(u_{11}, u_{12}),  \notag \\
&&\ldots, g_{(m-1)1}(u_{(m-1)1}, u_{(m-1)2}),g_{(m-1)2}(u_{(m-1)1},
u_{(m-1)2}))  \notag
\end{eqnarray}
where $g_{i1}, g_{i2}$ are the components of $g_i$ under the charts.
\end{lem}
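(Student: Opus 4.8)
The plan is to reduce the statement to the corresponding fact about a single surface $F$, proved in \cite[Lemma 3.2]{ZZ}, by exploiting the structure of the cyclic homeomorphism. The key observation is that a fixed point of $f=\tau\comp(\prod_{i=1}^m f_i)$ is a point $(a_1,\ldots,a_m)$ with $f_m(a_m)=a_1$, $f_1(a_1)=a_2,\ldots,f_{m-1}(a_{m-1})=a_m$. Composing these relations around the cycle shows that $a_1$ is a fixed point of the composite $h:=f_m\comp f_{m-1}\comp\cdots\comp f_1\colon F\to F$, and the remaining coordinates $a_2,\ldots,a_m$ are then \emph{determined} by $a_1$ via $a_2=f_1(a_1)$, etc. Thus the fixed-point set of $f$ is in bijection with $\fix(h)$, and the transversality we seek on $F^m$ should follow from transversality of the graph of $h$ to the diagonal on the single surface $F$.

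First I would invoke \cite[Lemma 3.2]{ZZ}: since every homeomorphism of a compact hyperbolic surface is isotopic to a diffeomorphism, I can isotope each $f_i$ to a diffeomorphism, and moreover arrange (via the single-surface result applied to the composite) that the graph of $h=g_m\comp\cdots\comp g_1$ is transversal to the diagonal in $F\times F$, with standard local canonical forms at each fixed point. The point is that the isotopies of the individual factors can be chosen freely and independently, so I have enough room to put $h$ in the desired general position. Since each $f_i\simeq g_i$, the product map is homotopic: $f\simeq g=\tau\comp(\prod_{i=1}^m g_i)$, as required.

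Next I would verify that transversality of $\mathrm{graph}(h)$ to the diagonal in $F^2$ implies transversality of $\mathrm{graph}(g)$ to the diagonal in $F^m$. Writing the fixed-point equation for $g$ as the vanishing of the map $x\mapsto g(x)-x$ (in local charts), the Jacobian of $g$ at a fixed point is a block-cyclic matrix: it shifts the $i$-th coordinate block to the $(i{+}1)$-st and applies $Dg_i$. The condition that $1$ is not an eigenvalue of this block-cyclic differential is equivalent, by expanding the characteristic determinant around the cycle, to the condition that $1$ is not an eigenvalue of $Dh=Dg_m\cdots Dg_1$ at the corresponding fixed point of $h$. This is exactly the transversality of $\mathrm{graph}(h)$ to the diagonal in $F\times F$, so the two transversality conditions match up, and the desired local canonical form for $g$ reads off directly from the block structure, with $g_{i1},g_{i2}$ being the coordinate components of $g_i$ in the chosen charts.

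The main obstacle I expect is the bookkeeping in the linear-algebra step: showing precisely that the block-cyclic differential of $g$ has $1$ as an eigenvalue if and only if $Dh$ does, so that transversality transfers cleanly between $F^m$ and $F$. This is the heart of why \cite[Lemma 3.2]{ZZ} generalizes from alternating to cyclic maps, and it must be done carefully enough to guarantee the stated local form. Everything else—the reduction of fixed points to $\fix(h)$, the free choice of isotopies, and the homotopy $f\simeq g$—is routine once this eigenvalue correspondence is in hand.
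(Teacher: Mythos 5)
Your proposal is correct and takes essentially the same route as the paper: the paper's ``proof'' is a one-line reduction to the argument of \cite[Lemma 3.2]{ZZ}, which is precisely what you reconstruct --- isotope each factor to a diffeomorphism, perturb one factor so that the composite $h=g_m\circ\cdots\circ g_1$ has graph transversal to the diagonal of $F\times F$, and transfer transversality to $F^m$ via the block-cyclic determinant identity $\det(I_{2m}-N)=\det(I_2-N_m\cdots N_1)$ (the same identity the paper uses again in the proof of Lemma \ref{index of cylic homeomorphism}). Your linear-algebra step and the reading-off of the local canonical form from product charts are exactly what the cited argument does, so there is no substantive gap.
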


\begin{lem}
\label{index of cylic homeomorphism} If $f=\tau\comp (\prod_{i=1}^m f_i):
F^m\to F^m$ is a cyclic homeomorphism, then the natural map
\begin{equation*}
\rho: F\to F^m, \quad a_1\mapsto (a_1, f_1(a_1),\ldots, f_{m-1}\comp\cdots%
\comp f_2\comp f_1(a_1))
\end{equation*}
induces an index-preserving one-to-one corresponding between the set $%
\mathrm{Fpc}(f_{m}\comp\cdots\comp f_2\comp f_1)$ of fixed point classes of $%
f_{m}\comp\cdots\comp f_2\comp f_1$ and the set $\mathrm{Fpc}(f)$ of fixed
point classes of $f$.
\end{lem}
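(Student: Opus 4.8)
The plan is to prove the correspondence in three stages: first on the level of fixed points, then on the level of fixed point classes, and finally matching the indices.

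First I would establish the point-level bijection. A point $(a_1,\ldots,a_m)\in F^m$ is fixed by $f=\tau\comp(\prod_{i=1}^m f_i)$ precisely when $f_m(a_m)=a_1$ and $f_{i-1}(a_{i-1})=a_i$ for $2\le i\le m$. The latter equations force $a_i=f_{i-1}\comp\cdots\comp f_1(a_1)$, and substituting into the first gives $f_m\comp\cdots\comp f_1(a_1)=a_1$. Hence $(a_1,\ldots,a_m)\in\fix(f)$ if and only if $a_1\in\fix(h)$ for $h:=f_m\comp\cdots\comp f_2\comp f_1$, the remaining coordinates being determined by $a_1$ exactly as in the formula for $\rho$. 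Thus $\rho$ restricts to a bijection $\fix(h)\to\fix(f)$.

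Next I would promote this to a bijection of fixed point classes using the covering-space definition. Fix lifts $\tilde f_i\colon\tilde F\to\tilde F$ of the $f_i$ to the universal cover, and identify the deck group of $\tilde F^m=(\tilde F)^m$ with $\pi_1(F)^m$. Every lift of $f$ has the form
$$\tilde f_\gamma\colon(\tilde x_1,\ldots,\tilde x_m)\mapsto(\gamma_1\tilde f_m(\tilde x_m),\gamma_2\tilde f_1(\tilde x_1),\ldots,\gamma_m\tilde f_{m-1}(\tilde x_{m-1})),\quad \gamma=(\gamma_1,\ldots,\gamma_m)\in\pi_1(F)^m.$$
Solving the fixed point equations of $\tilde f_\gamma$ coordinate by coordinate, exactly as above, shows that $(\tilde x_1,\ldots,\tilde x_m)\in\fix(\tilde f_\gamma)$ if and only if $\tilde x_1$ is a fixed point of the lift $\tilde h_\gamma:=(\gamma_1\tilde f_m)\comp(\gamma_m\tilde f_{m-1})\comp\cdots\comp(\gamma_2\tilde f_1)$ of $h$, the other coordinates being determined. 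Consequently $\rho$ carries $p_F(\fix\tilde h_\gamma)$ onto $p(\fix\tilde f_\gamma)$, where $p_F$ and $p$ denote the covering projections of $F$ and $F^m$. Since every lift of $h$ is of the form $\tilde h_\gamma$, this sets up a correspondence between lifts of $h$ and lifts of $f$ that respects their projected fixed point sets.

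The main obstacle is to check that this correspondence descends to fixed point classes, i.e. that it is compatible with conjugation of lifts by deck transformations. I would verify that conjugating $\tilde h_\gamma$ by $\delta\in\pi_1(F)$ changes the parameter $\gamma$ by precisely the twisted-conjugacy operation induced by conjugating $\tilde f_\gamma$ by a suitable deck transformation of $\tilde F^m$; here one uses the commutation relations $\tilde f_i\comp\delta=(f_i)_\pi(\delta)\comp\tilde f_i$. In the language of Reidemeister classes this amounts to the statement that the natural map from the twisted conjugacy classes of $h_\pi$ on $\pi_1(F)$ to those of $f_\pi$ on $\pi_1(F)^m$ (the latter being a cyclic twist) is a bijection. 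Once this is in place, $\rho$ induces a bijection $\fpc(h)\to\fpc(f)$.

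Finally I would check that $\rho$ preserves indices. Replacing each $f_i$ by the isotopic diffeomorphism $g_i$ of Lemma \ref{transversal} changes neither side's fixed point classes nor their indices, so I may assume the cyclic homeomorphism has graph transversal to the diagonal and work in the canonical charts provided there. At a fixed point $p=\rho(a_1)$ the derivative $Df_p$ is the block-cyclic map sending $(v_1,\ldots,v_m)$ to $(A_m v_m,A_1 v_1,\ldots,A_{m-1}v_{m-1})$ with $A_i=D(g_i)_{a_i}$; a direct block row reduction, together with the identity $\det(I-AB)=\det(I-BA)$, gives
$$\det(I-Df_p)=\det\big(I-A_mA_{m-1}\cdots A_1\big)=\det\big(I-D(g_m\comp\cdots\comp g_1)_{a_1}\big).$$
Hence the local index $\mathrm{sign}\det(I-Df_p)$ at $p$ equals the local index of $g_m\comp\cdots\comp g_1$ at $a_1$. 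Summing local indices over each fixed point class and using the class bijection of the previous step yields $\ind(f,\mathbf{F}')=\ind(h,\F)$, where $\mathbf{F}'$ is the class of $f$ corresponding to $\F\in\fpc(h)$, which completes the proof.
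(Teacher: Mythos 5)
Your proposal is correct and takes essentially the same route as the paper's proof: solve the fixed point equations coordinate-by-coordinate (both in $F^m$ and for lifts to $\widetilde F^m$, every lift of $f$ being of cyclic form) to get the bijection of fixed point classes, then invoke Lemma \ref{transversal} and the block-cyclic Jacobian identity $\det(I_{2m}-N)=\det(I_2-N_m\cdots N_2N_1)$ to match indices. The only cosmetic difference is that you parametrize all lifts by deck transformations and propose a twisted-conjugacy check, which is in fact already superfluous once you know $\rho$ is a bijection on fixed points carrying $p_F(\fix \tilde h_\gamma)$ onto $p(\fix \tilde f_\gamma)$ for every $\gamma$ -- exactly the shortcut the paper takes.
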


\begin{proof}
It is clear that
\begin{eqnarray}
\mathrm{Fix} f&=&\{(a_1,a_2,\ldots,a_m)|f_1(a_1)=a_2,
f_2(a_2)=a_3,\ldots,f_m(a_m)=a_1\}  \notag \\
&=&\{(a_1, f_1(a_1),\ldots, f_{m-1}\comp\cdots\comp f_2\comp %
f_1(a_1))|a_1\in \mathrm{Fix} (f_{m}\comp\cdots\comp f_2\comp f_1)\}.  \notag
\end{eqnarray}
Suppose that $a_1$ and $a_1^{\prime }$ are in the same fixed point class of $%
f_{m}\comp\cdots\comp f_1$, and $p: \widetilde F\to F$ is the universal
cover. Then there is a lifting $\tilde f_i$ of $f_i$ such that $%
a_1,a_1^{\prime }\in p(\mathrm{Fix} (\tilde f_m\comp\cdots\comp \tilde f_1))$%
, and there is a point $\tilde a_1\in p^{-1}(a_1)$ and a point $\tilde
a_1^{\prime -1}(a_1^{\prime })$ with $(\tilde f_m\comp\cdots\comp \tilde
f_1)(\tilde a_1) = \tilde a_1$ and $(\tilde f_m\comp\cdots\comp \tilde
f_1)(\tilde a_1^{\prime }) = \tilde a_1^{\prime }$. Hence,
\begin{eqnarray}
&&(\tau\comp(\tilde f_1\times \cdots\times\tilde f_m)) (\tilde a_1, \tilde
f_1(\tilde a_1),\ldots,\tilde f_{m-1}\comp\cdots\comp \tilde f_1(\tilde a_1))
\notag \\
&=& ((\tilde f_m\comp\cdots\comp \tilde f_1)(\tilde a_1), \tilde
f_1(a_1),\ldots,\tilde f_{m-1}\comp\cdots\comp \tilde f_1)(\tilde a_1))
\notag \\
&=&(\tilde a_1, \tilde f_1(\tilde a_1),\ldots,\tilde f_{m-1}\comp\cdots\comp %
\tilde f_1(\tilde a_1)).  \notag
\end{eqnarray}
It follows that
\begin{equation*}
(a_1, f_1(a_1),\ldots, f_{m-1}\comp\cdots\comp f_1(a_1))\in (\prod_m p)(%
\mathrm{Fix}(\tau\comp(\tilde f_1\times \cdots\times\tilde f_m))).
\end{equation*}
Similarly, we also have
\begin{equation*}
(a^{\prime }_1, f_1(a^{\prime }_1),\ldots, f_{m-1}\comp\cdots\comp %
f_1(a^{\prime }_1))\in (\prod_m p)(\mathrm{Fix}(\tau\comp(\tilde f_1\times
\cdots\times\tilde f_m))).
\end{equation*}
Since $(\tau\comp(\tilde f_1\times \cdots\times\tilde f_m))$ is a lifting of
$f$, we obtain that $(a_1, f_1(a_1),\ldots, f_{m-1}\comp\cdots\comp %
f_1(a_1)) $ and $(a^{\prime }_1, f_1(a^{\prime }_1),\ldots, f_{m-1}\comp%
\cdots\comp f_1(a^{\prime }_1))$ are in the same fixed point class of $f$.
Conversely, suppose that the two points above are in the same fixed point
class of $f$. Then there is a lifting $\tilde f_i$ of $f_i$ such that both
of them lie in $(\prod_m p)(\mathrm{Fix}(\tau\comp(\tilde f_1\times
\cdots\times\tilde f_m))) $. Hence, $a_1,a_1^{\prime }\in p(\mathrm{Fix}
(\tilde f_m\comp \cdots\comp\tilde f_1))$, we conclude that $a_1$ and $%
a_1^{\prime }$ are in the same fixed point class of $f_m\comp\cdots\comp f_2%
\comp f_1$.

Now we shall prove that as a bijective correspondence between the sets of
fixed point classes, $\rho$ is index-preserving. Since the indices of fixed
point classes are invariant under homotopies, by Lemma \ref{transversal} we
may homotope $f_i$ for $i=1,2,\ldots,m$ such that the graph of $f$ is
transversal to the diagonal, and $f$ has local canonical forms in a
neighborhood of every fixed point. Suppose that the differential $Df_i$ of $%
f_i$ at $a_i$ is $N_i=\left(
\begin{array}{cc}
\frac{\partial f_{i1}}{\partial u_{i1}} & \frac{\partial f_{i1}}{\partial
u_{i2}}\\
\frac{\partial f_{i2}}{\partial u_{i1}} & \frac{\partial f_{i2}}{\partial
u_{i2}}
\end{array}
\right).$ Then the differential $Df$ of $f$ at $(a_1,a_2,\ldots,a_m)$ is
\begin{equation*}
N=\left(%
\begin{array}{ccccc}
0 & 0 & \cdots & 0 & N_m \\
N_1 & 0 & \cdots & 0 & 0 \\
0 & N_2 & \cdots & 0 & 0 \\
\vdots & \vdots & \ddots & \vdots & \vdots \\
0 & 0 & \cdots & N_{m-1} & 0%
\end{array}
\right).
\end{equation*}
Therefore, the index of $f_m\comp\cdots\comp f_2\comp f_1$ at the fixed
point $a_1$ is
\begin{equation*}
\mathrm{ind}(f_m\comp\cdots\comp f_2\comp f_1, a_1) = sgn\det(I_2-N_m\cdots
N_2N_1),
\end{equation*}
and the index of $f$ at the fixed point $(a_1,a_2,\ldots,a_m)$ is
\begin{eqnarray}
\mathrm{ind}(f, (a_1,a_2,\ldots,a_m))&=&sgn \det(I_{2m}-N)  \notag \\
&=&sgn\det(I_2-N_{m-1}\cdots N_1N_m)  \notag \\
&=& sgn\det(I_2-N_m\cdots N_2N_1) ,  \notag
\end{eqnarray}
where $I_k$ is the identity matrix of order $k$. Therefore,
\begin{equation*}
\mathrm{ind}(f_m\comp\cdots\comp f_2\comp f_1, a_1)=\mathrm{ind}(f,
(a_1,a_2,\ldots,a_m)),
\end{equation*}
and the proof is finished.
\end{proof}

As a corollary, we have

\begin{cor}
\label{alter for N&L}
\begin{equation*}
N(f)=N(f_m\comp\cdots\comp f_2\comp f_1), \quad L(f)=L(f_m\comp\cdots\comp %
f_2\comp f_1).
\end{equation*}
\end{cor}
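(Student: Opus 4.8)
The plan is to read off both identities directly from the index-preserving bijection established in Lemma \ref{index of cylic homeomorphism}. Write $g:=f_m\comp\cdots\comp f_2\comp f_1$ for brevity, and recall that the map $\rho$ furnishes a one-to-one correspondence between $\mathrm{Fpc}(g)$ and $\mathrm{Fpc}(f)$ satisfying $\mathrm{ind}(g,\mathbf{F})=\mathrm{ind}(f,\rho(\mathbf{F}))$ for every $\mathbf{F}\in\mathrm{Fpc}(g)$. The whole point is that $N$ and $L$ are both functions of the (multi)set of indices of fixed point classes, so an index-preserving bijection of those classes forces both invariants to agree.

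For the Nielsen number I would argue as follows. By definition $N(g)$ counts the essential fixed point classes of $g$, namely those $\mathbf{F}$ with $\mathrm{ind}(g,\mathbf{F})\neq 0$. Since $\rho$ is a bijection that preserves indices, it carries essential classes to essential classes and inessential to inessential; hence it restricts to a bijection between the essential classes of $g$ and those of $f$, giving $N(f)=N(g)$. For the Lefschetz number I would invoke the Lefschetz--Hopf theorem, which writes $L(f)$ as the finite sum $\sum_{\mathbf{F}\in\mathrm{Fpc}(f)}\mathrm{ind}(f,\mathbf{F})$ and similarly expresses $L(g)$ over $\mathrm{Fpc}(g)$. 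Re-indexing the sum for $g$ through the bijection $\rho$ and using the term-by-term equality of indices, the two sums coincide, so $L(f)=L(g)$.

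I do not expect any genuine obstacle here: once the index-preserving correspondence of Lemma \ref{index of cylic homeomorphism} is available, both equalities are formal consequences of the definitions of $N$ and $L$ and of the Lefschetz--Hopf theorem. The only subtlety worth recording is that $\rho$ must send essential classes to essential classes, but this is automatic, since preserving the integer index in particular preserves the property of being nonzero.
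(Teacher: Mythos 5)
Your proof is correct and is exactly the argument the paper intends: the corollary is stated as an immediate consequence of Lemma \ref{index of cylic homeomorphism}, with $N$ read off from the bijection between essential classes and $L$ from the Lefschetz--Hopf theorem applied to the index-preserving correspondence. You have merely made explicit the details the paper leaves implicit.
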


Directly following from Lemma \ref{index of cylic homeomorphism}, Corollary %
\ref{alter for N&L} and \cite[Theorem 4.1]{JG}, we have

\begin{prop}
\label{bounds for cyclic homeomorphism} If $f: F^m\to F^m$ is a cyclic
homeomorphism, then

$\mathrm{(A)}$ For every fixed point class $\mathbf{F}$ of $f$, we have
\begin{equation*}
2\chi(F)-1\leq \mathrm{ind}(f,\mathbf{F})\leq 1.
\end{equation*}
Moreover, almost every fixed point class $\mathbf{F}$ of $f$ has index $\geq
-1$, in the sense that
\begin{equation*}
\sum_{\mathrm{ind}(f,\mathbf{F})<-1}\{\mathrm{ind}(f,\mathbf{F})+1\}\geq
2\chi(F),
\end{equation*}
where the sum is taken over all fixed point classes $\mathbf{F}$ with $%
\mathrm{ind}(f,\mathbf{F})<-1$;

$\mathrm{(B)}$ Let $L(f)$ and $N(f)$ be the Lefschetz number and the Nielsen
number of $f$ respectively. Then
\begin{equation*}
|L(f)-\chi(F)|\leq N(f)-\chi(F).
\end{equation*}
\end{prop}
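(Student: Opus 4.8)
The plan is to reduce the entire statement to the already–known single–surface case and then quote the Jiang--Guo bounds as a black box. Set $g:=f_m\comp\cdots\comp f_2\comp f_1\colon F\to F$, which is again a self–homeomorphism of the connected closed hyperbolic surface $F$, so in particular $\chi(F)<0$. By Lemma~\ref{index of cylic homeomorphism} the natural map $\rho$ induces a one–to–one correspondence $\fpc(g)\leftrightarrow\fpc(f)$ that preserves indices. Hence the collection of values $\{\ind(g,\mathbf{F}')\mid \mathbf{F}'\in\fpc(g)\}$ coincides, value by value, with $\{\ind(f,\mathbf{F})\mid \mathbf{F}\in\fpc(f)\}$, so any inequality or summation that depends only on these indices transfers verbatim from $g$ to $f$.

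For part (A), I would apply \cite[Theorem 4.1]{JG} to the homeomorphism $g$ of $F$. Since $\chi(F)<0$, that theorem yields both the two–sided bound $2\chi(F)-1\leq\ind(g,\mathbf{F}')\leq 1$ for every fixed point class $\mathbf{F}'$ of $g$, and the ``almost every class has index $\geq -1$'' estimate $\sum_{\ind(g,\mathbf{F}')<-1}\{\ind(g,\mathbf{F}')+1\}\geq 2\chi(F)$. Because the correspondence of Lemma~\ref{index of cylic homeomorphism} is bijective and index–preserving, the fixed point classes of $f$ with index $<-1$ correspond exactly to those of $g$ with index $<-1$, so both statements hold with $g$ replaced by $f$, which is precisely (A).

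For part (B), the key observation is that the passage from $g$ to $f$ leaves the Lefschetz and Nielsen numbers unchanged: Corollary~\ref{alter for N&L} gives $L(f)=L(g)$ and $N(f)=N(g)$. The Jiang--Guo theorem applied to $g$ also supplies $|L(g)-\chi(F)|\leq N(g)-\chi(F)$, and substituting $L(f)$ for $L(g)$ and $N(f)$ for $N(g)$ produces $|L(f)-\chi(F)|\leq N(f)-\chi(F)$, as required.

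I do not expect any genuinely hard step to remain, since the real content has already been placed in Lemma~\ref{index of cylic homeomorphism}, which converts the product problem into a statement about the single hyperbolic surface $F$, and in \cite[Theorem 4.1]{JG}, which is quoted directly. The only points requiring a little care are to confirm that $g$ really is a self–homeomorphism of $F$—which it is, being a composition of the homeomorphisms $f_i$—so that \cite[Theorem 4.1]{JG} applies with $\chi(F)<0$, and to check that the index–preserving bijection respects the restricted summation over classes of index $<-1$; the latter is immediate, since a bijection preserving each index value preserves the subset on which the index is $<-1$.
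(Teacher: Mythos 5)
Your proposal is correct and follows exactly the paper's route: the paper's entire proof is the citation ``Directly following from Lemma \ref{index of cylic homeomorphism}, Corollary \ref{alter for N&L} and \cite[Theorem 4.1]{JG},'' and you have simply spelled out that reduction (transfer the indices through the index-preserving bijection, transfer $L$ and $N$ through the corollary, and quote Jiang--Guo for the surface homeomorphism $g=f_m\comp\cdots\comp f_1$). Nothing is missing; your write-up is just a more explicit version of the same argument.
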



\section{Fixed points of product maps and proofs of Theorem \ref{main thm1} and \ref{main thm2}}\label{sect 5}

\label{sect. of borel conjecture}

To prove Theorem \ref{main thm1} and Theorem \ref{main thm2}, we need some facts about fixed points of product maps.

\subsection{Fixed points of product maps}

Let $X_1,\ldots,X_n$ be connected compact polyhedra.

\begin{defn}
A self-map $f: X_1\times\cdots\times X_n\to X_1\times\cdots\times X_n$ is
called a \emph{product map}, if
\begin{equation*}
f=f_1\times\cdots\times f_n: X_1\times\cdots\times X_n\to
X_1\times\cdots\times X_n,\quad (a_1,\ldots,a_n)\mapsto
(f_1(a_1),\ldots,f_n(a_n)),
\end{equation*}
where $f_i$ is a self-map of $X_i, i=1,\ldots,n$.
\end{defn}

By a proof analogous to that of \cite[Lemma 2.2]{ZZ}, we have the following lemma
about the fixed point classes of product maps.

\begin{lem}
\label{product of index} If $f: X_1\times\cdots\times X_n\to
X_1\times\cdots\times X_n$ is a product map, then $\mathrm{Fix} f=%
\mathrm{Fix} f_1\times\cdots\times \mathrm{Fix} f_n,$ and each fixed point
class $\mathbf{F}\in \mathrm{Fpc}(f)$ splits into a product of some fixed
point classes of $f_i$, i.e.,
\begin{equation*}
\mathbf{F}=\mathbf{F}_1\times\cdots\times \mathbf{F}_n, \quad\mathrm{ind}(f,%
\mathbf{F})=\mathrm{ind}(f_1,\mathbf{F}_1)\cdots\mathrm{ind}(f_n, \mathbf{F}%
_n),
\end{equation*}
where $\mathbf{F}_i\in \mathrm{Fpc} (f_i)$ is a fixed point class of $f_i$
for $i=1,\ldots,n$. Moreover,
\begin{equation*}
L(f)=L(f_1)\cdots L(f_n),\quad N(f)=N(f_1)\cdots N(f_n).
\end{equation*}
\end{lem}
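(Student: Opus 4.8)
The plan is to reduce everything to the corresponding statement for a single factor by exploiting the product structure of both the space and the covering-space machinery. The key structural fact is that the universal cover of a product splits as a product of universal covers: if $p_i:\widetilde{X}_i\to X_i$ are the universal covers, then $\prod_i p_i:\prod_i\widetilde{X}_i\to\prod_i X_i$ is the universal cover of the product, and $\pi_1(\prod_i X_i)\cong\prod_i\pi_1(X_i)$. Since $f$ is a product map, any lift of $f$ has the form $\prod_i\widetilde{f}_i$, where $\widetilde{f}_i$ is a lift of $f_i$. I would begin by verifying directly that $\fix f=\fix f_1\times\cdots\times\fix f_n$, which is immediate from the coordinatewise definition of $f$.

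Next I would establish the splitting of fixed point classes. Two fixed points $(a_1,\dots,a_n)$ and $(a_1',\dots,a_n')$ of $f$ lie in the same fixed point class precisely when there is a lift $\prod_i\widetilde{f}_i$ whose fixed point set projects to contain both; because this lift factors coordinatewise, this condition holds if and only if for each $i$ the points $a_i$ and $a_i'$ lie in the same fixed point class of $f_i$. This shows that every $\mathbf{F}\in\fpc(f)$ decomposes as $\mathbf{F}=\mathbf{F}_1\times\cdots\times\mathbf{F}_n$ with $\mathbf{F}_i\in\fpc(f_i)$, and conversely that every such product of nonempty fixed point classes arises in this way. The bijection $\fpc(f)\leftrightarrow\fpc(f_1)\times\cdots\times\fpc(f_n)$ follows.

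For the index formula I would use the product (multiplicativity) property of the fixed point index. Each $\mathbf{F}_i$ has an isolating open set $U_i\subseteq X_i$ with $\ind(f_i,\mathbf{F}_i)=\ind(f_i,U_i)$; then $U_1\times\cdots\times U_n$ isolates $\mathbf{F}$, and the multiplicativity of the fixed point index under products of maps gives
\begin{equation*}
\ind(f,\mathbf{F})=\ind(f_1,\mathbf{F}_1)\cdots\ind(f_n,\mathbf{F}_n).
\end{equation*}
The Lefschetz and Nielsen formulas then drop out: summing the index identity over all fixed point classes and invoking the Künneth theorem (so that $L(f)=\sum_q(-1)^q\trace((f_1)_*\otimes\cdots\otimes(f_n)_*)$ factors as $\prod_i L(f_i)$) yields $L(f)=L(f_1)\cdots L(f_n)$, while counting those product classes all of whose factors are essential gives $N(f)=N(f_1)\cdots N(f_n)$, using that a product class is essential exactly when each factor is essential.

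The main obstacle is the index multiplicativity step: one must be careful that the product of isolating neighborhoods is genuinely an isolating neighborhood for the product map (no extra fixed points slip in on the boundary) and that the local degree computation really factors as a product, which is the content of the commutativity and multiplicativity axioms of the fixed point index rather than something formal. Everything else—the splitting of $\fix f$, the lift-based characterization of fixed point classes, and the Künneth bookkeeping—is routine once the product structure of the universal cover and of $\pi_1$ is in hand.
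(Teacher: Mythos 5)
Your proposal is correct and takes essentially the same approach as the paper: the paper offers no argument of its own but appeals to the analogous proof of Lemma 2.2 in [ZZ], which likewise rests on the fact that lifts of a product map over the product of universal covers are products of lifts (giving the coordinatewise splitting of fixed point classes) together with the multiplicativity of the fixed point index. Your worry about the isolating neighborhood is resolved immediately by $\mathrm{Fix}\, f=\mathrm{Fix}\, f_1\times\cdots\times\mathrm{Fix}\, f_n$, so nothing further is needed.
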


\subsection{Proofs of Theorem \ref{main thm1} and Theorem \ref{main thm2}}

Now we can give the proofs of Theorem \ref{main thm1} and \ref{main thm2}. Since the index of fixed points is homotopy invariant, we omit the base
points of fundamental groups in the following.

\begin{proof}[Proof of Theorem \protect\ref{main thm1}]
Let $X=X_1\times\cdots\times X_n$, and $f:X\to X$ a homotopy equivalence.
Then $f$ induces an automorphism $f_{\pi}:\pi _{1}(X)\rightarrow \pi_{1}(X)$%
. Note that $\pi_1(X)$ is isomorphic to the direct product $%
\prod_{i=1}^{n}\pi_1(X_i)$. By Proposition \ref{aut of prouct group}, the
condition (1) in Theorem \ref{main thm1} implies $f_{\pi}=\phi_{1}\times%
\cdots\times \phi_n$ with $\phi_{i}$ an automorphism of $\pi _{1}(X_i),
i=1,\dots, n$. Note that $X_i$ is a compact aspherical polyhedron, so $\phi_{i}$
can be induced by a homotopy equivalence $f_{i}:X_i\to X_i, i=1,\dots, n$.
Since the product $X$ is also an compact aspherical polyhedron, $f$ is
homotopic to the product map $f_{1}\times\cdots\times f_n$ which is
also a homotopy equivalence. Recall that $X_i$ has BIPHE, then the index $%
\mathrm{ind}(f_i,\mathbf{F}_i)$ of any fixed point class $\mathbf{F}_i$ of $%
f_i$ has a finite bound $\mathcal{B}_{X_i}$ depending only on $X_i$. By the
product formula of index in Lemma \ref{product of index}, we have the index $%
|\mathrm{ind}(f,\mathbf{F})|<\mathcal{B}_X:=\prod_{i=1}^n\mathcal{B}_{X_i}$
for every fixed point class $\mathbf{F}$ of $f$. Therefore, $X$ has BIPHE.
\end{proof}

\begin{proof}[Proof of Theorem \protect\ref{main thm2}]
Let $M_1,\ldots, M_n$ be connected closed Riemannian manifolds, each with
negative sectional curvature everywhere, and $M=M_1\times\cdots\times M_n$.
Collect together the coordinates corresponding to homotopy equivalent $M_i$'s and
present it in the form
\begin{equation*}
M=\prod^s_{i=1}M_i^{\,n_i}\times \prod^m_{i=s+1}M_i^{\,n_i},
\end{equation*}
where $M_1,\ldots, M_s$, $0\leq s\leq n$, are hyperbolic surfaces, $%
M_{s+1},\ldots, M_m$ have dimensions $\geq 3$, $n_i\geq 1$ (recall that $n_i$ is not the dimension but the number of copies of $M_i$),
$n_1+\cdots+n_m=n $ and $M_i\not\simeq M_j$ for $1\leq i\neq j\leq m$. Then by
Example \ref{centerless of negatived mfd}, the fundamental group $G_i:=\pi_1(M_i)$ is centerless and
indecomposable for $i=1,2,\ldots n$. Therefore,
\begin{equation*}
\pi_1(M)=\prod^s_{i=1}G_i^{\,n_i}\times \prod^m_{i=s+1}G_i^{\,n_i},
\end{equation*}
where $G_i\ncong G_j$ for $1\leq i\neq j\leq m$.

For any homotopy equivalence $f:M\to M$, $f$ induces an automorphism $%
f_{\pi} $ of $\pi_1(M)$, then by Proposition \ref{aut of prouct group},
there exist automorphisms $\phi_{i,j}\in \mathrm{Aut}(G_i)$ and permutations
$\sigma_i \in S_{n_i}$, such that
\begin{equation*}
f_{\pi}= \prod_{i=1}^s (\sigma_i \comp \prod_{j=1}^{n_i} \phi_{i,j})\times
\prod_{i=s+1}^m (\sigma_i \comp \prod_{j=1}^{n_i} \phi_{i,j}):=\phi\times
\psi,
\end{equation*}
where $\phi=\prod_{i=1}^s (\sigma_i \comp \prod_{j=1}^{n_i} \phi_{i,j})$ and
$\psi=\prod_{i=s+1}^m (\sigma_i \comp \prod_{j=1}^{n_i} \phi_{i,j})$. Recall
that $G_i$ for $i>s$ is the fundamental group of a Riemannian manifold with
dimensions $\geq 3$, then $\psi$ can be induced by a homotopy equivalence $%
h: \prod^m_{i=s+1}M_i^{\,n_i}\to \prod^m_{i=s+1}M_i^{\,n_i}.$ On the other
hand, $G_i$ ($i\leq s$) is the fundamental group of a closed hyperbolic
surface, so the automorphism $\phi_{i,j}\in \mathrm{Aut}(G_i)$ can be
induced by a homeomorphism $f_{i,j}$, and hence $\phi$ is induced by the
homeomorphism
\begin{equation*}
g=\prod_{i=1}^s(\sigma_i \comp \prod_{j=1}^{n_i}
f_{i,j}):\prod^s_{i=1}M_i^{\,n_i}\to \prod^s_{i=1}M_i^{\,n_i}.
\end{equation*}
That is $\phi=g_{\pi}$ and thus $f_{\pi}=g_{\pi}\times h_{\pi}=(g\times
h)_{\pi} $. Since $M$ is also aspherical, $f$ is homotopic to the
product map $g\times h$. By Theorem \ref{main thm3}, for every
fixed point class $\mathbf{F}$ of $h$, we have $|\mathrm{ind}(h,\mathbf{F})|<%
\mathcal{B}_M$ for some finite bound $\mathcal{B}_M$ depending only on $M$.

To complete the proof, by Lemma \ref{product of index}, it suffices to show
that $|\mathrm{ind}(g,\mathbf{F}^{\prime })|<\mathcal{B}^{\prime }_M$ for
some finite bound $\mathcal{B}^{\prime }_M$ depending only on $M$, for every
fixed point class $\mathbf{F'}$ of $g$. Since every permutation $\sigma_i\in
S_{n_i}$ is a product of disjoint cycles, and $M_1,\ldots, M_s$ are
hyperbolic surfaces, we can rewritten
\begin{equation*}
g=\prod_{k} g_k:\prod^s_{i=1}M_i^{\,n_i}\to \prod^s_{i=1}M_i^{\,n_i}
\end{equation*}
as a product of finitely many cyclic homeomorphisms $g_k$ of products of
hyperbolic surfaces. Then by Proposition \ref{bounds for cyclic
homeomorphism}, we can choose $\B^{\prime
}_M=\prod^s_{i=1}|2\chi(M_i)-1|^{\,n_i}.$
\end{proof}





\begin{thebibliography}{JWZ}
\bibitem[BH]{bh} M. Bridson and A. Haefliger, \textit{Metric Spaces of
Non-Positive Curvature}, Grund. Math. Wiss. 319, Springer-Verlag,
Berlin-Heidelberg-New York, 1999.

\bibitem[J1]{fp1} B. Jiang, \emph{Lectures on Nielsen Fixed Point Theory},
Contemporary Mathematics vol.~14, American Mathematical Society, Providence
(1983).

\bibitem[J2]{fp2} B. Jiang, \emph{Bounds for fixed points on surfaces},
Math. Ann. 311 (1998), 467--479.

\bibitem[JG]{JG} B. Jiang and J. Guo, \emph{Fixed points of surface
diffeomorphisms}, Pac. J. Math. 160 (1) (1993), 67--89.

\bibitem[JW]{JW} B. Jiang and S. Wang, \emph{Lefschetz numbers and Nielsen
numbers for homeomorphisms on aspherical manifolds}, Topology Hawaii, 1990,
World Sci. Publ., River Edge, NJ, 1992, 119--136.

\bibitem[JWZ]{JWZ} B. Jiang, S.D. Wang and Q. Zhang, \emph{Bounds for fixed
points and fixed subgroups on surfaces and graphs}, Algebr. Geom. Topol. 11
(2011), 2297--2318.

\bibitem[K1]{K1} M. Kelly, \emph{A bound for the fixed-point index for surface
mappings}, Ergodic Theory Dynam. Systems 17 (1997), 1393--1408.

\bibitem[K2]{K2} M. Kelly, \emph{Bounds on the fixed point indices for self-maps
of certain simplicial complexes}, Topology Appl. 108 (2000), 179--196.

\bibitem[Mc]{Mc} C. McCord,\emph{Estimating Nielsen numbers on
infrasolvmanifolds}, Pac. J. Math. 154 (1992), 345--368.

\bibitem[RS]{RS} E. Rips and Z. Sela, \emph{Structure and rigidity in
hyperbolic groups. I}, Geom. Funct. Anal. 4 (1994), no. 3, 337--371.

\bibitem[Z1]{Z} Q. Zhang, \emph{Bounds for fixed points on Seifert manifolds}%
, Topology Appl. 159 (15) (2012), 3263--3273.

\bibitem[Z2]{Z2} Q. Zhang, \emph{Bounds for fixed points on hyperbolic
3-manifolds}, Topology Appl. 164 (2014), 182--189.

\bibitem[Z3]{Z3} Q. Zhang, \emph{Bounds for fixed points on hyperbolic
manifolds}, Topology Appl. 185-186 (2015), 80--87.

\bibitem[ZVW]{ZVW} Q. Zhang, E. Ventura and J. Wu, \emph{Fixed subgroups are
compressed in surface groups}, Internat. J. Algebra Comput. 25 (5) (2015),
865--887.

\bibitem[ZZ]{ZZ} Q. Zhang and X. Zhao, \emph{Bounds for fixed points on products
of hyperbolic surfaces}, J. Fixed Point Theory Appl. 21: 6 (2019), 1--11.
\end{thebibliography}
\end{document}